\documentclass[runningheads]{llncs}
\usepackage[T1]{fontenc}
\usepackage{graphicx}
\usepackage{color} 

\usepackage{amsmath, amssymb, xparse}%
\DeclareMathOperator{\Tr}{Tr}

\usepackage{enumitem}
\setlist[enumerate]{leftmargin=.5in}
\setlist[itemize]{leftmargin=.5in}

\usepackage[
  bookmarks,
  linkcolor= blue,
  citecolor= blue,
  filecolor= blue,
  urlcolor=  blue,
  colorlinks=true,
  hyperindex=true,
  pdftitle={Allelopathy and competition},
  pdfauthor={T. Hmidhi and R. Fekih-Salem}
]{hyperref}
\graphicspath{{Images/}}
\begin{document}
\title{AM2 model with a series configuration of interconnected chemostats and distinct removal rates}

\titlerunning{AM2 Interconnected Chemostats in Series}

%
\author{Thamer Hmidhi\inst{1}
\and Radhouane Fekih-Salem\inst{1}\orcidID{0000-0003-1168-4930} 
}
\authorrunning{T. Hmidhi and R. Fekih-Salem}
%
\institute{University of Tunis El Manar, National Engineering School of Tunis, LAMSIN, 1002, Tunis, Tunisia \\
\email{thamer.hmidhi@enit.utm.tn}\\
\email{radhouane.fekih-salem@enit.utm.tn}
}
\maketitle              
\begin{abstract}
We investigate the dynamics of the AM2 model in a serial configuration of two interconnected chemostats with distinct dilution rates. The system is described by nonlinear differential equations, for which the usual reduction from an eight-dimensional system to a four-dimensional one is no longer possible due to the distinct dilution rates. We provide a complete qualitative study by characterizing all steady states and establishing necessary and sufficient conditions for the existence, multiplicity, and local stability of nine types of equilibria, expressed in terms of key operating parameters. We show that coexistence equilibria involving two species in the second bioreactor are not always stable, even when they exist, and require an additional condition related to the trace of the Jacobian matrix. Identifying parameter sets leading to Hopf bifurcations and limit cycles remains an open problem. These results provide new insights into the dynamics of interconnected bioreactors and motivate further investigations on oscillatory behaviors.

\keywords{AM2 model \and interconnected chemostats \and serial configuration \and nonlinear dynamics \and stability analysis}
\end{abstract}
\section{Introduction}                                 \label{SecIntro}
Anaerobic Digestion (AD) is a complex biological process in which organic matter is converted into biogas, mainly methane, in the absence of oxygen. Due to its strong nonlinearity and the multiplicity of interacting mechanisms, detailed models such as ADM1 remain difficult to analyze. This has motivated the development of reduced models, among which the two-step AM2 model captures the essential dynamics while remaining analytically tractable.

In parallel, spatial heterogeneity in bioreactors has led to the study of structured configurations such as serially interconnected chemostats. These configurations provide a more realistic representation of industrial processes and may enhance performance in terms of substrate conversion and biogas production, while introducing richer dynamical behaviors compared to a single reactor.

Within the classical chemostat framework, Haider et al.~\cite{HaidarMBE2011} showed that an input substrate threshold can reverse the advantages between series and parallel configurations. Dali Youcef et al.~\cite{DaliYoucefMBE2020,DaliYoucefBMB2022} further analyzed the performance of two chemostats in series, filling a gap in the literature. In parallel, Benyahia et al.~\cite{BenyahiaJPC2012} investigated the asymptotic behavior of the AM2 model.

More recently, in~\cite{HmidhiBMB2025}, an anaerobic digestion model with two interconnected chemostats in series was analyzed in the particular case of identical dilution rates, leading to a reduced four-dimensional system and a complete equilibrium analysis. However, allowing distinct dilution rates in each reactor provides a more realistic framework, as already emphasized in the AM2 model of Bernard et al.~\cite{BernardBB2001}. 
Motivated by this observation, the present work extends the previous analysis by considering different removal rates in the two reactors. Let $r\in(0,1)$ denote the fraction of the total volume such that $r_1V=rV$ and $r_2V=(1-r)V$. The dynamics are described by the following system of ordinary differential equations.
\begin{equation}                                  \label{ModelAM2S2C}
\left\{\begin{array}{lll}
\dot{S}_1^1  &=& D_1\left(S_1^{\text{in}}- S_1^1\right)-k_1\mu_1\left(S_1^1\right) X_1^1, \\
\dot{X}_1^1  &=&\left(\mu_1\left(S_1^1\right) - \alpha 
 D_1\right)X_1^1, \\
\dot{S}_2^1  &=& D_1\left(S_2^{\text{in}}- S_2^1\right)+k_2\mu_1\left(S_1^1\right)X_1^1-k_3\mu_2\left(S_2^1\right) X_2^1,  \\
\dot{X}_2^1  &=&\left(\mu_2\left(S_2^1\right) - \alpha 
 D_1\right)X_2^1,\\
\dot{S}_1^2  &=& D_2\left(S_1^1- S_1^2\right)-k_1\mu_1\left(S_1^2\right) X_1^2, \\
\dot{X}_1^2  &=& \alpha D_2\left(X_1^1- X_1^2\right)+\mu_1\left(S_1^2\right)X_1^2,  \\
\dot{S}_2^2  &=& D_2\left(S_2^1- S_2^2\right)+k_2\mu_1\left(S_1^2\right)X_1^2-k_3\mu_2\left(S_2^2\right) X_2^2 ,\\
\dot{X}_2^2  &=& \alpha D_2\left(X_2^1- X_2^2\right)+\mu_2\left(S_2^2\right)X_2^2,
\end{array}\right.
\end{equation}
where \(S_i^j\) and \(X_i^j\) (\(i,j = 1,2\)) denote the substrate and biomass concentrations in each bioreactor \(j\); \(S_i^{\mathrm{in}}\) are the input concentrations of the substrates in the first bioreactor with flow rate \(Q\), and \(D = Q/V\) is the dilution rate. The removal rates \(\alpha D_i\) for chemostat \(i\) are given by
\[
\alpha D_i = \frac{\alpha D}{r_i} = \frac{\alpha Q}{r_i V},
\]
where \(\alpha \in (0,1)\) decouples the hydraulic retention time (HRT) and the solid retention time (SRT).
The main objective of this work is to provide a complete mathematical analysis of the AM2 model in a serial configuration of two interconnected chemostats with distinct removal rates. We characterize all steady states and establish necessary and sufficient conditions for their existence, multiplicity, and local stability. The analysis covers nine types of equilibria and highlights the impact of heterogeneous removal rates on the qualitative behavior of the system, particularly on stability and coexistence in the second bioreactor.

The paper is organized as follows. Section \ref{SecAssumAnMath} introduces the assumptions on the growth functions and preliminary results. We then analyze the existence, multiplicity, and local stability of equilibria in terms of the operating parameters \(D\), \(S_1^{\mathrm{in}}\), \(S_2^{\mathrm{in}}\), and \(r\). Section \ref{SecConc} presents concluding remarks on the influence of heterogeneous dilution rates and possible extensions. All technical proofs are given in Appendix \ref{SecAppProofs}.
\section{Assumptions on the model and mathematical analysis}             \label{SecAssumAnMath}
 Consider system \eqref{ModelAM2S2C}. We assume that the growth functions \(\mu_1(S_1)\) and \(\mu_2(S_2)\) belong to \(\mathcal{C}^1(\mathbb{R}_+)\) and satisfy the following hypotheses.
\begin{description}
\item[H1:] $\mu_1(0)=0$, $\mu_1(+\infty)=m_1$, and $\mu_1'(S_1)>0$ for all $S_1>0$.
\item[H2:] $\mu_2(0)=0$, $\mu_2(+\infty)=0$, and there exists $S_2^{\text{m}}>0$ such that $\mu_2 '(S_2)>0$, for all $0<S_2<S_2^{\text{m}}$ and $\mu_2'(S_2)<0$, for all $S_2>S_2^{\text{m}}$.
\end{description}
To preserve the biological meaning of \eqref{ModelAM2S2C}, we prove the following result.
\begin{proposition}      \label{PropModDef}
All solutions of model \eqref{ModelAM2S2C} with nonnegative initial conditions remain nonnegative and bounded for all positive times. Let \(\xi = \bigl(S_1^1, X_1^1, S_2^1, X_2^1,\\ S_1^2, X_1^2, S_2^2, X_2^2\bigr)\) be the state vector. The set
\[
\Omega = \left\{ \xi \in \mathbb{R}_+^8 : S_1^i + (k_1 - k_2) X_1^i + S_2^i + k_3 X_2^i \leq \frac{S_1^{\mathrm{in}} + S_2^{\mathrm{in}}}{\alpha^{i}},\; i = 1,2 \right\}
\]
is positively invariant and is a global attractor for \eqref{ModelAM2S2C}.
\end{proposition}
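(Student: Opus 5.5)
The plan is to treat positivity and boundedness separately. For boundedness we use the two linear combinations appearing in the definition of $\Omega$,
\[
Z^i := S_1^i + (k_1-k_2)X_1^i + S_2^i + k_3X_2^i,\qquad i=1,2,
\]
and apply a comparison (Gronwall-type) argument in cascade: first in the first chemostat, then in the second. Throughout we use the standing biological assumption $k_1>k_2$, which is implicit in the definition of $\Omega$, so that each $Z^i$ is a nonnegative combination of the state variables.

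\emph{Nonnegativity.} We check that the vector field points inward (or is tangent) on each face of $\mathbb{R}_+^8$.
\begin{itemize}
\item The faces $X_1^1=0$ and $X_2^1=0$ are invariant, because the equations for $X_1^1$ and $X_2^1$ are of the form $\dot X = (\cdot)X$. Hence $X_1^1,X_2^1\ge 0$ for all time.
\item On $X_1^2=0$ we have $\dot X_1^2=\alpha D_2X_1^1\ge 0$. Similarly, on $X_2^2=0$ we have $\dot X_2^2=\alpha D_2X_2^1\ge0$.
\item On $S_1^1=0$ we have $\dot S_1^1=D_1S_1^{\rm in}>0$, using $\mu_1(0)=0$ from H1.
\item On $S_2^1=0$ we have $\dot S_2^1=D_1S_2^{\rm in}+k_2\mu_1(S_1^1)X_1^1\ge0$, using $\mu_2(0)=0$ from H2.
\item On $S_1^2=0$ and on $S_2^2=0$ the derivatives are $D_2S_1^1\ge0$ and $D_2S_2^1+k_2\mu_1(S_1^2)X_1^2\ge0$, respectively.
\end{itemize}
A standard argument (for instance, considering the first exit time from $\mathbb{R}_+^8$, or perturbing the inputs by $\varepsilon>0$ and letting $\varepsilon\to0$) then gives forward invariance of $\mathbb{R}_+^8$. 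The order in which the faces are handled matters: the first-reactor variables must be known to be nonnegative before the second-reactor faces can be treated.

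\emph{First chemostat.} Adding the first four equations of \eqref{ModelAM2S2C} with weights $1,\,k_1-k_2,\,1,\,k_3$, the $\mu_1$ terms cancel because $-k_1+(k_1-k_2)+k_2=0$, and the $\mu_2$ terms cancel as well. This gives
\[
\dot Z^1 = D_1\bigl(S_1^{\rm in}+S_2^{\rm in}-S_1^1-S_2^1\bigr)-\alpha D_1\bigl((k_1-k_2)X_1^1+k_3X_2^1\bigr).
\]
Since $\alpha<1$ and $S_j^1\ge0$, we have $-D_1S_j^1\le-\alpha D_1S_j^1$, and therefore
\[
\dot Z^1\le D_1(S_1^{\rm in}+S_2^{\rm in})-\alpha D_1Z^1.
\]
By comparison, $Z^1(t)\le \max\{Z^1(0),(S_1^{\rm in}+S_2^{\rm in})/\alpha\}$ and $\limsup_{t\to\infty} Z^1\le (S_1^{\rm in}+S_2^{\rm in})/\alpha$. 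More precisely, $Z^1(t)\le \frac{S^{\rm in}}{\alpha}+\bigl(Z^1(0)-\frac{S^{\rm in}}{\alpha}\bigr)e^{-\alpha D_1t}$, where $S^{\rm in}:=S_1^{\rm in}+S_2^{\rm in}$.

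\emph{Second chemostat.} The same weighted sum of the last four equations gives
\[
\dot Z^2=D_2(S_1^1+S_2^1)+\alpha D_2\bigl((k_1-k_2)X_1^1+k_3X_2^1\bigr)-D_2(S_1^2+S_2^2)-\alpha D_2\bigl((k_1-k_2)X_1^2+k_3X_2^2\bigr),
\]
again with the reaction terms cancelling. Using $\alpha<1$ twice (the inflow terms are bounded above by $D_2Z^1$, and the outflow terms are bounded above by $-\alpha D_2Z^2$) yields
\[
\dot Z^2\le D_2Z^1-\alpha D_2Z^2.
\]

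\emph{Invariance, attraction, and the main obstacle.} Positive invariance of $\Omega$ follows directly from these inequalities. On $Z^1=S^{\rm in}/\alpha$ we have $\dot Z^1\le0$. On $Z^2=S^{\rm in}/\alpha^2$ with $Z^1\le S^{\rm in}/\alpha$, we have $\dot Z^2\le0$. Boundedness of all components follows because each $Z^i$ dominates each of its nonnegative summands.

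The main obstacle is the attractivity part, because the bound on $Z^2$ depends on the time-varying $Z^1$. I would handle it with the variation-of-constants formula
\[
Z^2(t)\le Z^2(0)e^{-\alpha D_2t}+D_2\int_0^te^{-\alpha D_2(t-s)}Z^1(s)\,ds.
\]
Inserting the explicit bound on $Z^1$ and using $\limsup Z^1\le S^{\rm in}/\alpha$ gives $\limsup Z^2\le S^{\rm in}/\alpha^2$. Hence every trajectory enters any $\varepsilon$-neighbourhood of $\Omega$ in finite time, and all $\omega$-limit sets lie in $\Omega$, which is the sense in which $\Omega$ is a global attractor.
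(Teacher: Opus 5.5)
Your proposal is correct and follows essentially the same route as the paper. Both use the same weighted sums $Z^1, Z^2$ with cancelling reaction terms, the inequalities $\dot Z^1 \le D_1(S_1^{\mathrm{in}}+S_2^{\mathrm{in}}) - \alpha D_1 Z^1$ and $\dot Z^2 \le D_2 Z^1 - \alpha D_2 Z^2$ (the paper replaces $Z^1$ by its bound $M_1$), and a cascaded Gronwall/comparison argument. Your version is somewhat more complete on two points: you check nonnegativity face by face, which the paper omits by citation, and your variation-of-constants step actually yields $\limsup Z^2 \le (S_1^{\mathrm{in}}+S_2^{\mathrm{in}})/\alpha^2$, which the paper's bound $Z_2 \le \max\{Z_2(0), M_1/\alpha\}$ does not give when $Z_1(0) > (S_1^{\mathrm{in}}+S_2^{\mathrm{in}})/\alpha$.
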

 
Next, the existence and asymptotic stability of steady states in the eight-dimensional system \eqref{ModelAM2S2C} follow from the properties of its constituent subsystems. The proof of this result is given in Appendix~\ref{SecAppProofs}.
\begin{proposition}                              \label{PropExist}
Assume that Hypotheses \textbf{H1} and \textbf{H2} hold. Then, the nine types of steady states of (\ref{ModelAM2S2C}) and their components are given in Table \ref{TabComSS}. Their existence conditions are given in Table \ref{TableCondExisStab}.
\end{proposition}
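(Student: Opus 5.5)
The proof will exploit the cascade (triangular) structure of \eqref{ModelAM2S2C}. The first four equations involve only $\bigl(S_1^1,X_1^1,S_2^1,X_2^1\bigr)$ and form the classical AM2 model with dilution rate $D_1$ and removal rate $\alpha D_1$. The last four equations are driven by the first‑reactor state. So I would first list the steady states of the first bioreactor. Then, for each of them, I would solve the second‑bioreactor equations with the first‑reactor values as given inputs. The nine types of Table~\ref{TabComSS} should appear as the admissible combinations.

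\textbf{Step 1 (first bioreactor).} From $\dot X_1^1=0$, either $X_1^1=0$ or $\mu_1(S_1^1)=\alpha D_1$. By \textbf{H1}, the latter has the unique solution $\lambda_1=\mu_1^{-1}(\alpha D_1)$ when $\alpha D_1<m_1$.
\begin{itemize}
\item If $X_1^1=0$, then $S_1^1=S_1^{\mathrm{in}}$.
\item If $S_1^1=\lambda_1$, then $X_1^1=(S_1^{\mathrm{in}}-\lambda_1)/(\alpha k_1)$, which requires $\lambda_1<S_1^{\mathrm{in}}$.
\end{itemize}
Similarly, $\dot X_2^1=0$ gives either $X_2^1=0$ or $\mu_2(S_2^1)=\alpha D_1$. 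By \textbf{H2}, the latter has either no solution or two solutions $\lambda_2\le S_2^{\mathrm m}\le\bar\lambda_2$. The corresponding $X_2^1=\bigl(S_2^{\mathrm{in}}+k_2\alpha X_1^1-S_2^1\bigr)/(\alpha k_3)$ must be positive. This yields the standard conditions of the type $\lambda_2<S_2^{\mathrm{in}}+\tfrac{k_2}{k_1}(S_1^{\mathrm{in}}-\lambda_1)$, or the same with $S_1^{\mathrm{in}}-\lambda_1$ replaced by $0$.

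\textbf{Step 2 (second bioreactor, $X_1^2$ and $S_1^2$).} I would fix a first‑reactor equilibrium and set $\dot S_1^2=\dot X_1^2=0$. There are two cases.
\begin{itemize}
\item If $X_1^1>0$, then necessarily $\mu_1(S_1^2)<\alpha D_2$ and
\[
X_1^2=\frac{\alpha D_2X_1^1}{\alpha D_2-\mu_1(S_1^2)} .
\]
Substituting into $D_2(S_1^1-S_1^2)=k_1\mu_1(S_1^2)X_1^2$ gives a scalar equation $F(S_1^2)=0$. The left side decreases in $S_1^2$ and the right side increases on $[0,\mu_1^{-1}(\alpha D_2))$ and blows up at its end. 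Hence there is exactly one root in $(0,\min(S_1^1,\mu_1^{-1}(\alpha D_2)))$, with no extra condition.
\item If $X_1^1=0$, then either $X_1^2=0$ and $S_1^2=S_1^1$, or $\mu_1(S_1^2)=\alpha D_2$. In the latter case $S_1^2=\mu_1^{-1}(\alpha D_2)$ and $X_1^2=(S_1^1-S_1^2)/(\alpha k_1)$, which requires $\mu_1^{-1}(\alpha D_2)<S_1^{\mathrm{in}}$.
\end{itemize}

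\textbf{Step 3 (second bioreactor, $X_2^2$ and $S_2^2$).} I would treat the pair $(S_2^2,X_2^2)$ in the same way, with the extra source term $k_2\mu_1(S_1^2)X_1^2$. If $X_2^1=0$, I get either $X_2^2=0$, or $\mu_2(S_2^2)=\alpha D_2$ with the two roots $\lambda_2^{(2)}$ and $\bar\lambda_2^{(2)}$ and a positivity condition on $X_2^2$. If $X_2^1>0$, I again get $X_2^2=\alpha D_2X_2^1/(\alpha D_2-\mu_2(S_2^2))$, and $S_2^2$ solves
\[
D_2\bigl(S_2^1-S_2^2\bigr)+k_2\mu_1(S_1^2)X_1^2=\frac{k_3\alpha D_2\,\mu_2(S_2^2)X_2^1}{\alpha D_2-\mu_2(S_2^2)}
\]
on the set where $\mu_2(S_2^2)<\alpha D_2$. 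Collecting all admissible combinations, and discarding those that are infeasible (for instance, $X_1^1>0$ forces $X_1^2>0$), should give exactly the nine types of Table~\ref{TabComSS} with the conditions of Table~\ref{TableCondExisStab}.

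\textbf{Main obstacle.} I expect this last scalar equation to be the hard step, because \textbf{H2} makes $\mu_2$ non‑monotone. The set $\{\mu_2<\alpha D_2\}$ may have two components, $[0,\lambda_2^{(2)})$ and $(\bar\lambda_2^{(2)},\infty)$. On the first, the right side increases to $+\infty$ and the left side decreases, so there is exactly one root. On the second, the right side tends to $+\infty$ at $\bar\lambda_2^{(2)}$ and then decreases to $0$, so the number of roots depends on parameters. This is where the multiplicity statements will come from. I would handle it by restricting to $S_2^2\le S_2^1+k_2\mu_1(S_1^2)X_1^2/D_2$, which is forced by nonnegativity, and comparing this bound with $\bar\lambda_2^{(2)}$. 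If the comparison does not settle uniqueness, I would state the existence and multiplicity conditions directly in terms of the roots of this equation, as the tables presumably do.
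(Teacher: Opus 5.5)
Your proposal is correct and is essentially the paper's proof: the same cascade decomposition over the upper AM2 equilibria $E_{00},E_{10},E_{0i},E_{1i}$, the same exclusion of combinations in which a positive biomass inflow meets zero downstream biomass, and the same reduction of the remaining cases to scalar equations solved by monotonicity or a sign change. The only difference is cosmetic: you write these equations in $S_i^2$ rather than in $X_i^2$ (the paper's $f_i(x)=g_i(x)$), which is equivalent through $S_1^2=S_1^1+\alpha k_1(X_1^1-X_1^2)$ and $S_2^2=S_2^1-\alpha k_2(X_1^1-X_1^2)+\alpha k_3(X_2^1-X_2^2)$.

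For this proposition you only need one root; multiplicity belongs to Proposition~\ref{propmultip}. When $\alpha D_2>\mu_2(S_2^{\mathrm m})$ the set $\{\mu_2<\alpha D_2\}$ does not split into two components, and that root then comes from the intermediate value theorem on $\bigl[0,\,S_2^1+k_2\mu_1(S_1^2)X_1^2/D_2\bigr]$, just as the paper uses $F_2(X_2^{1*})>0>F_2(d)$.
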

\begin{table}[ht]
\caption{Auxiliary functions where $D_i:=D/r_i$, $D_i^{rm}:=r_i\mu_2\left(S_2^{\text{m}}\right)$ and $r_0=1$, for $i=0,1,2$ and $X_1^{2*}$ is the unique solution of equation $f_1(x)=g_1(x)$ with the functions $f_1$ and $g_1$ are defined in Table \ref{TabFunc2}.}  \label{TabFunc1}
\vspace{-8mm}
\begin{center}
\begin{tabular}{ @{\hspace{1mm}}l@{\hspace{1mm}}  @{\hspace{1mm}}l@{\hspace{1mm}} }	
\hline
                                         &  Definition
\\\hline
$\lambda_1^i(D,r)$                       &
\begin{tabular}{l}
$\lambda_1^i(D,r)$ is the unique solution of equation $\mu_1(S_1) =\alpha D_i$. \\
It is defined for $0 < D < r_im_1/\alpha$. \\
If $D \geq r_im_1/\alpha$, by convention we let $\lambda_1^i(D,r) = +\infty$.
\end{tabular}
\\ \hline
$\lambda_2^{ij}(D,r)$                     &
\begin{tabular}{l}
$\lambda_2^{i1}(D,r)<\lambda_2^{i2}(D,r)$ are the two solutions of equation $\mu_2\left(S_2\right) = \alpha D_i$. \\
They are defined for $0 < D \leq D_i^{rm}/\alpha$. \\
If $D > D_i^{rm}/\alpha$, by convention we let $\lambda_2^{ij}(D,r) = +\infty$.
\end{tabular}
\\ \hline
$F_{1j}\left(D,r,S_2^{\text{in}}\right)$         &
\begin{tabular}{l}
$F_{1j}\left(D,r,S_2^{\text{in}}\right) =\lambda_1^1(D,r)+\frac{k_1}{k_2}\left(\lambda_2^{1j}(D,r)-S_2^{\text{in}}\right)$. \\
It is defined for $0 < D < \min\left(r_1m_1, D_1^{\text{m}}\right)$.
\end{tabular}
\\ \hline
$F_{2j}\left(D,r,S_2^{\text{in}}\right)$         &
\begin{tabular}{l}
$F_{2j}\left(D,r,S_2^{\text{in}}\right) =\lambda_1^2(D,r)+\frac{k_1}{k_2}\left(\lambda_2^{2j}(D,r)-S_2^{\text{in}}\right)$. \\
It is defined for $0 < D < \min\left(r_2m_1, D_2^{\text{m}}\right)$.
\end{tabular}
\\ \hline
$\phi_j\left(D,r,S_1^{\text{in}},S_2^{\text{in}}\right)$         &
\begin{tabular}{l} $\phi_j\left(D,r,S_1^{\text{in}},S_2^{\text{in}}\right)=S_2^{\text{in}}+\alpha k_2X_1^{2*}\left(D,r,S_1^{\text{in}}\right)-\lambda_2^{2j}(D,r)$.\\
It is defined for $0 < D \leq D_2^{\text{m}}$ and $S_1^{\text{in}}>\lambda_1^1$.
\end{tabular}
\\ \hline
\end{tabular}
\end{center}
\vspace{-2mm}
\end{table}

\begin{table}[!ht]
\caption{Components of all equilibrium points of the eight-dimensional system (\ref{ModelAM2S2C}). Functions $\lambda_1^i$, $\lambda_2^{1i}$, $\lambda_2^{2i}$, $F_{ij}$, and $\phi_i$ are defined in Table \ref{TabFunc1}. Variables with a $^*$ are defined implicitly by equations noted below.}
\label{TabComSS}
\vspace{-4mm}
\begin{center}
\renewcommand{\arraystretch}{1.3}
\begin{tabular}{l c c}
\hline
      & $(S_1^{1*}, X_1^{1*}, S_2^{1*}, X_2^{1*})$ & $(S_1^{2*}, X_1^{2*}, S_2^{2*}, X_2^{2*})$ \\
\hline
$\mathcal{E}_{00}^{00}$ & $(S_1^{\text{in}}, 0, S_2^{\text{in}}, 0)$ & $(S_1^{\text{in}}, 0, S_2^{\text{in}}, 0)$ \\
$\mathcal{E}_{00}^{0i}$ & $(S_1^{\text{in}}, 0, S_2^{\text{in}}, 0)$ & $\left(S_1^{\text{in}}, 0, \lambda_2^{2i}, \dfrac{S_2^{\text{in}} - \lambda_2^{2i}}{\alpha k_3}\right)$ \\
$\mathcal{E}_{00}^{10}$ & $(S_1^{\text{in}}, 0, S_2^{\text{in}}, 0)$ & $\left(\lambda_1^2, \dfrac{S_1^{\text{in}} - \lambda_1^2}{\alpha k_1}, S_2^{2*}, 0\right)^{\dagger}$ \\
$\mathcal{E}_{00}^{1i}$ & $(S_1^{\text{in}}, 0, S_2^{\text{in}}, 0)$ & $\left(\lambda_1^2, \dfrac{S_1^{\text{in}} - \lambda_1^2}{\alpha k_1}, \lambda_2^{2i}, \dfrac{k_2(S_1^{\text{in}} - F_{2i})}{\alpha k_1 k_3}\right)$ \\
$\mathcal{E}_{10}^{10}$ & $\left(\lambda_1^1, \dfrac{S_1^{\text{in}} - \lambda_1^1}{\alpha k_1}, S_2^{1*}, 0\right)$ & $\left(S_1^{2*}, X_1^{2*}, S_2^{2*}, 0\right)^{\ddagger}$ \\
$\mathcal{E}_{10}^{1i}$ & $\left(\lambda_1^1, \dfrac{S_1^{\text{in}} - \lambda_1^1}{\alpha k_1}, S_2^{1*}, 0\right)$ & $\left(S_1^{2*}, X_1^{2*}, \lambda_2^{2i}, \dfrac{\phi_i}{\alpha k_3}\right)^{\ddagger}$ \\
$\mathcal{E}_{0i}^{01}$ & $\left(S_1^{\text{in}}, 0, \lambda_2^{1i}, \dfrac{S_2^{\text{in}} - \lambda_2^{1i}}{\alpha k_3}\right)$ & $\left(S_1^{\text{in}}, 0, S_2^{2*}, X_2^{2*}\right)^{\star}$ \\
$\mathcal{E}_{0i}^{11}$ & $\left(S_1^{\text{in}}, 0, \lambda_2^{1i}, \dfrac{S_2^{\text{in}} - \lambda_2^{1i}}{\alpha k_3}\right)$ & $\left(\lambda_1^2, \dfrac{S_1^{\text{in}} - \lambda_1^2}{\alpha k_1}, S_{21}^{2*}, X_2^{2*}\right)^{\star}$ \\
$\mathcal{E}_{1i}^{11}$ & $\left(\lambda_1^1, \dfrac{S_1^{\text{in}} - \lambda_1^1}{\alpha k_1}, \lambda_2^{1i}, \dfrac{k_2(S_1^{\text{in}} - F_{1i})}{\alpha k_1 k_3} \right)$ & $\left(S_1^{2*}, X_1^{2*}, S_{22}^{2*}, X_2^{2*}\right)^{\ddagger,\star}$ \\
\hline
\end{tabular}
\end{center}
\vspace{-2mm}
\begin{minipage}{0.95\textwidth}
\footnotesize
\textbf{Notes:}
$^{\dagger}$ $S_2^{2*}=S_2^{\mathrm{in}}+ \frac{k_2}{k_1}\bigl(S_1^{\mathrm{in}}-\lambda_1^2\bigr)$. 
$^{\ddagger}$ $S_2^{1*}=S_2^{\mathrm{in}}+ \frac{k_2}{k_1}\bigl(S_1^{\mathrm{in}}-\lambda_1^1\bigr)$; $X_1^{2*}$ is the unique solution of $f_1(x) = g_1(x)$; $S_1^{2*} = S_1^{\mathrm{in}} - \alpha k_1 X_1^{2*}$; $S_2^{2*} = S_2^{\mathrm{in}} + \alpha k_2 X_1^{2*}$. 
$^{\star}$ $X_2^{2*}$ is a solution of $f_2(x) = g_2(x)$; $S_2^{2*} = S_2^{\mathrm{in}} - \alpha k_3 X_2^{2*}$; $S_{21}^{2*}=\frac{k_2}{k_1}\bigl(S_1^{\mathrm{in}}-\lambda_1^2\bigr)+S_2^{\mathrm{in}}-\alpha k_3X_2^{2*}$; $S_{22}^{2*}=S_2^{\mathrm{in}}+\alpha k_2X_1^{2*}-\alpha k_3 X_2^{2*}$.
\end{minipage}
\end{table}
{\small
\begin{table}[!ht]
\begin{center}
\caption{Existence and stability conditions of steady states of system~\eqref{ModelAM2S2C} are summarized, except for \(\mathcal{E}_{00}^{02}\), \(\mathcal{E}_{00}^{12}\), \(\mathcal{E}_{10}^{12}\), \(\mathcal{E}_{02}^{01}\), \(\mathcal{E}_{02}^{11}\) and \(\mathcal{E}_{12}^{11}\), which are unstable. The functions \(\lambda_1^i\), \(\lambda_2^{1i}\), \(\lambda_2^{2i}\), \(F_{ij}\) and \(\phi_i\) (\(i,j=1,2\)) are defined in Table~\ref{TabFunc1}. The functions \(f_1\), \(f_2\), \(g_1\) and \(g_2\) are defined in Table~\ref{TabFunc2}, while \(X_1^{2*}\) is the unique solution of \(f_1(x)=g_1(x)\) and \(X_2^{2*}\) is a solution of \(f_2(x)=g_2(x)\). \(\Tr\bigl(J_3^3(\mathcal{E}_{01}^{11})\bigr)\) [resp. \(\Tr\bigl(J_3^3(\mathcal{E}_{11}^{11})\bigr)\)] is the trace of \(J_3^3(\mathcal{E}_{01}^{11})\) [resp. \(J_3^3(\mathcal{E}_{11}^{11})\)], where \(J_3^3(\mathcal{E}_{01}^{11})\) [resp. \(J_3^3(\mathcal{E}_{11}^{11})\)] is the Jacobian matrix of system~\eqref{ModelAM2S2C} at the equilibrium \(\mathcal{E}_{01}^{11}\) [resp. \(\mathcal{E}_{11}^{11}\)] and is given in \eqref{J33E01-11} [resp. \eqref{J33E11-11}].}\label{TableCondExisStab}
\vspace{-0.2mm}
\begin{tabular}{ @{\hspace{1mm}}l@{\hspace{1mm}} @{\hspace{1mm}}l@{\hspace{1mm}} @{\hspace{1mm}}l@{\hspace{1mm}} }
\hline
                         & Existence conditions                           &   Stability conditions with $i=1$ \\\hline 
$\mathcal{E}_{00}^{00}$  & Always exists         &        $S_1^{\text{in}}<\min\left(\lambda_1^1, \lambda_1^2\right)$, $S_2^{\text{in}}\notin\left[\min\left(\lambda_2^{11}, \lambda_2^{21}\right),\max\left(\lambda_2^{12}, \lambda_2^{22}\right)\right]$    \\
 $\mathcal{E}_{00}^{0i}$& $S_2^{\text{in}}>\lambda_2^{2i}$   & $S_1^{\text{in}}<\min\left(\lambda_1^1, \lambda_1^2\right)$, $S_2^{\text{in}}\notin\left[\lambda_2^{11}, \lambda_2^{12}\right]$    \\
$\mathcal{E}_{00}^{10}$  & $S_1^{\text{in}}>\lambda_1^2$  &  $S_1^{\text{in}}<\lambda_1^1$, $S_2^{\text{in}}\notin\left[\lambda_2^{11}, \lambda_2^{12}\right]$, $S_2^{\text{in}}+\frac{k_2\left(S_1^{\text{in}}-\lambda_1^2\right)}{k_1} \notin\left[\lambda_2^{21}, \lambda_2^{22}\right]$  \\
$\mathcal{E}_{00}^{1i}$  & $S_1^{\text{in}}>\max\left(\lambda_1^2,F_{2i}\right)$ & $S_1^{\text{in}}<\lambda_1^1$, $S_2^{\text{in}}\notin\left[\lambda_2^{11}, \lambda_2^{12}\right]$ \\
$\mathcal{E}_{10}^{10}$  & $S_1^{\text{in}}>\lambda_1^1$ &  $S_2^{\text{in}}+\frac{k_2}{k_1}\left(S_1^{\text{in}}-\lambda_1^1\right) \notin\left[\lambda_2^{11}, \lambda_2^{12}\right]$ and $\phi_1<0$ or $\phi_2>0$\\
$\mathcal{E}_{10}^{1i}$  & $S_1^{\text{in}}>\lambda_1^1$, $\phi_i>0$ & $S_2^{\text{in}}+\frac{k_2}{k_1}\left(S_1^{\text{in}}-\lambda_1^1\right) \notin\left[\lambda_2^{11}, \lambda_2^{12}\right]$ \\
$\mathcal{E}_{0i}^{01}$  & $S_2^{\text{in}}>\lambda_2^{1i}$ &  $S_1^{\text{in}}<\min\left(\lambda_1^1, \lambda_1^2\right)$\\
$\mathcal{E}_{0i}^{11}$  & $S_1^{\text{in}}>\lambda_1^2$, $S_2^{\text{in}}>\lambda_2^{1i}$  & $S_1^{\text{in}}<\lambda_1^1$, $g_2'(X_2^{2*})>f_2'(X_2^{2*})$ and $\Tr(J_3^3(\mathcal{E}_{01}^{11}))<0$  
\\
$\mathcal{E}_{1i}^{11}$  & $S_1^{\text{in}}>\max\left(\lambda_1^1,F_{1i}\right)$ & $g_2'(X_2^{2*})>f_2'(X_2^{2*})$ and $\Tr(J_3^3(\mathcal{E}_{11}^{11}))<0$.
\\ \hline
\end{tabular}
\end{center}
\vspace{-8mm}
\end{table}
}
The following proposition establishes the existence and multiplicity of equilibria in \eqref{ModelAM2S2C}. The proof is analogous to that of Proposition~3 in \cite{HmidhiBMB2025} by substituting the equilibrium point components with those of \eqref{ModelAM2S2C}, and is therefore omitted.
\begin{proposition}                                 \label{propmultip}
Assume that the existence conditions in Table \ref{TableCondExisStab} hold for the equilibria
$\mathcal{E}_{00}^{kl}$, $\mathcal{E}_{10}^{1l}$, $\mathcal{E}_{0i}^{k1}$ and $\mathcal{E}_{1i}^{11}$, $i=1,2$, $k=0,1$, and $l=0,1,2$. 
Let $x_2^{\text{m}}$ be the maximum value of $f_2$ (see Lemma \ref{lemH2}).
\begin{itemize}[leftmargin=*]
\item The equilibria $\mathcal{E}_{00}^{kl}$, $\mathcal{E}_{10}^{1l}$ and $\mathcal{E}_{01}^{01}$ are unique.
\item There exists at least one equilibrium of the form $\mathcal{E}_{02}^{01}$. Generically, the system has an odd number of equilibria of the form $\mathcal{E}_{02}^{01}$.
\item If $X_2^{1*}> x_2^{\text{m}}$, the equilibrium $\mathcal{E}_{0i}^{11}$ and $\mathcal{E}_{1i}^{11}$ are unique. If $X_2^{1*}< x_2^{\text{m}}$, then there exists at least one equilibrium of the form $\mathcal{E}_{0i}^{11}$ and $\mathcal{E}_{1i}^{11}$. Generically, the system has an odd number of equilibria of the form $\mathcal{E}_{0i}^{11}$ and $\mathcal{E}_{1i}^{11}$, $i=1,2$.
\end{itemize}
\end{proposition}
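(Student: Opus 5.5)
We reduce every equilibrium in the second chemostat to a scalar equation and count its solutions by monotonicity or by an intermediate value argument. By the triangular (series) structure of \eqref{ModelAM2S2C}, the first-reactor components $(S_1^{1*},X_1^{1*},S_2^{1*},X_2^{1*})$ are fixed first, since they solve an autonomous AM2 subsystem. The second-reactor equations are then a non-autonomous-free algebraic system driven by these fixed inputs.

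\textbf{Step 1: the unique equilibria.}
For $\mathcal{E}_{00}^{kl}$ with $X_1^{1*}=X_2^{1*}=0$, the second reactor is a standard AM2 chemostat with removal rate $\alpha D_2$. Its components are explicit ($\lambda_1^2$, $\lambda_2^{2i}$, and the linear expressions in Table~\ref{TabComSS}), so uniqueness is immediate once the existence conditions hold.

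For $\mathcal{E}_{10}^{1l}$, the equations for $S_1^2$ and $X_1^2$ decouple from the second species. Eliminating $S_1^2=S_1^{\mathrm{in}}-\alpha k_1X_1^2$ via the conservation combination leaves $f_1(x)=g_1(x)$. We will use Lemma~\ref{lemH2} (or the analogous monotonicity statement for $f_1,g_1$) to show that one side is increasing and the other decreasing on the admissible interval. Opposite signs of $f_1-g_1$ at the endpoints then give exactly one root $X_1^{2*}$. The remaining components are either explicit ($S_2^{2*}$, or $\lambda_2^{2i}$ with $X_2^{2*}=\phi_i/(\alpha k_3)$), or are determined uniquely.

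For $\mathcal{E}_{01}^{01}$, the $X_2^2$-equation is $\alpha D_2(X_2^{1*}-x)+\mu_2(S_2^{\mathrm{in}}-\alpha k_3x)x=0$, written as $f_2(x)=g_2(x)$. Since $S_2^{1*}=\lambda_2^{11}<S_2^{\mathrm m}$, the relevant branch of $\mu_2$ is monotone on the admissible range $x\ge X_2^{1*}$, where $S_2^2\le\lambda_2^{11}$. We will check that this forces $f_2-g_2$ to be strictly monotone there, which gives uniqueness.

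\textbf{Step 2: $\mathcal{E}_{02}^{01}$ and the three-species cases.}
Here $S_2^{1*}=\lambda_2^{12}>S_2^{\mathrm m}$, so the admissible interval for $x=X_2^2$ runs over the non-monotone part of $\mu_2$ and we only have existence. We evaluate $h=f_2-g_2$ at the endpoints:
\begin{itemize}
\item at $x=X_2^{1*}$, $h$ has the sign of $\mu_2(\lambda_2^{12})-\alpha D_2$ up to the factor $x$;
\item at $x=S_2^{\mathrm{in}}/(\alpha k_3)$, the dilution term dominates.
\end{itemize}
The intermediate value theorem then gives at least one root. For genericity, roots are transversal for generic parameters ($h'\neq0$, via Sard's theorem applied to the parameter dependence). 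Simple roots of a function changing sign between the endpoints are odd in number.

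The cases $\mathcal{E}_{0i}^{11}$ and $\mathcal{E}_{1i}^{11}$ reduce to the same $f_2=g_2$ equation with the explicit $S_{21}^{2*}$ or $S_{22}^{2*}$ substituted. By Lemma~\ref{lemH2}, $f_2$ is unimodal with maximum $x_2^{\mathrm m}$. If $X_2^{1*}>x_2^{\mathrm m}$, then $f_2$ is decreasing on the admissible range while $g_2$ is increasing, so there is exactly one intersection. Otherwise we repeat the sign-change and odd-count argument.

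\textbf{Main obstacle.}
The hardest step is establishing the endpoint signs and the monotonicity of $g_2$ and $f_2$ on the correct interval for each case. We would first write $g_2(x)=\alpha D_2(x-X_2^{1*})/x$, which is increasing for $x>0$, and $f_2(x)=\mu_2(c-\alpha k_3x)$ with the case-specific constant $c$. Everything then follows from \textbf{H2} together with Lemma~\ref{lemH2}, exactly as in Proposition~3 of \cite{HmidhiBMB2025}.
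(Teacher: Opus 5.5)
Your overall route matches the paper's. You use the series structure to fix the first-reactor state, then reduce the second reactor to two scalar equations. The first is \(f_1=g_1\), for which the relevant result is Lemma~\ref{lemH1}, not Lemma~\ref{lemH2}. The second is \(f_2=g_2\), handled by Lemma~\ref{lemH2} (unimodal against increasing). You then conclude with the intermediate value theorem and a generic odd count. Your transversality/Sard justification of ``generically odd'' actually supplies more than the paper does.

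There is, however, a concrete error in the step on which existence of \(\mathcal{E}_{02}^{01}\) rests: the sign at the left endpoint \(x=X_2^{1*}\). With \(h=f_2-g_2\) and \(g_2(x)=\alpha D_2(x-X_2^{1*})/x\), you have \(g_2(X_2^{1*})=0\), so
\[
h\bigl(X_2^{1*}\bigr)=f_2\bigl(X_2^{1*}\bigr)=\mu_2\bigl(\lambda_2^{12}\bigr)=\alpha D_1>0 .
\]
Equivalently, \(x\,h(x)\) equals \(\mu_2(\lambda_2^{12})X_2^{1*}>0\) there. The quantity you wrote, \(\mu_2(\lambda_2^{12})-\alpha D_2=\alpha(D_1-D_2)\), drops the biomass inflow term \(\alpha D_2 X_2^{1*}\) coming from the first reactor. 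Since \(D_1=D/r\) and \(D_2=D/(1-r)\), this quantity has the sign of \(1-2r\). For \(r\ge 1/2\) it is nonpositive, and together with \(h<0\) at the right endpoint you would get no sign change at all. So, as stated, your argument fails exactly in the distinct-rate regime this paper is about. The correct evaluation makes it work for every \(r\in(0,1)\).

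When you repeat the argument for \(\mathcal{E}_{0i}^{11}\) and \(\mathcal{E}_{1i}^{11}\), use the correct endpoint values:
\begin{itemize}
\item At the left endpoint, \(h(X_2^{1*})=\mu_2\bigl(S_2^{1*}+\alpha k_2(X_1^{2*}-X_1^{1*})\bigr)>0\), with \(X_1^{1*}=0\) in the \(\mathcal{E}_{0i}^{11}\) case.
\item The right endpoint is \(d\) from Table~\ref{TabFunc2}, which here equals \((S_2^{\mathrm{in}}+\alpha k_2X_1^{2*})/(\alpha k_3)\) rather than \(S_2^{\mathrm{in}}/(\alpha k_3)\). There \(f_2(d)=\mu_2(0)=0<g_2(d)\).
\end{itemize}
For \(\mathcal{E}_{1i}^{11}\), both the positivity of the left value and the inequality \(d>X_2^{1*}\) rely on \(X_1^{2*}>X_1^{1*}\) from Lemma~\ref{lemH1}, which you should invoke explicitly. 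With these endpoint values corrected, the rest of your plan goes through.
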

The following proposition establishes the stability conditions for the equilibria of System \eqref{ModelAM2S2C}. A proof of this result is presented in Appendix~\ref{SecAppProofs}.
\begin{proposition}                              \label{PropStab}
Assume that Hypotheses \textbf{H1} and \textbf{H2} hold. The steady states $\mathcal{E}_{00}^{02}$, $\mathcal{E}_{00}^{12}$, $\mathcal{E}_{10}^{12}$, $\mathcal{E}_{02}^{01}$, $\mathcal{E}_{02}^{11}$ and $\mathcal{E}_{12}^{11}$ of system (\ref{ModelAM2S2C}) are unstable when they exist. The stability conditions of all other steady states are given in Table \ref{TableCondExisStab}.
\end{proposition}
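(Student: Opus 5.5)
Order the state variables as $(S_1^1,X_1^1,S_2^1,X_2^1,S_1^2,X_1^2,S_2^2,X_2^2)$. The Jacobian of \eqref{ModelAM2S2C} at any equilibrium is then block lower triangular:
\[
J=\begin{pmatrix} J^1 & 0\\ C & J^2\end{pmatrix},
\]
since the first four equations do not depend on the second reactor. Hence the spectrum of $J$ is the union of the spectra of the $4\times 4$ blocks $J^1$ (first reactor) and $J^2$ (second reactor). Each block is itself block triangular in the pairs $(S_1,X_1)$ and $(S_2,X_2)$, because $S_2,X_2$ do not enter the equations for $S_1,X_1$. So the spectrum of $J$ reduces to the spectra of four $2\times 2$ blocks $J_1^1,J_2^1,J_1^2,J_2^2$, which I will write down for each of the nine types in Table \ref{TabComSS}.

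\textbf{First reactor.} The first reactor is a classical AM2 chemostat with dilution $D_1$ and removal $\alpha D_1$, so I will use the known structure there.
\begin{itemize}
\item When $X_1^1=0$, $J_1^1$ has eigenvalues $-D_1$ and $\mu_1(S_1^{\rm in})-\alpha D_1$. The latter is negative if and only if $S_1^{\rm in}<\lambda_1^1$.
\item When $X_1^1>0$, $J_1^1$ has trace $-D_1-k_1\mu_1'X_1^1<0$ and determinant $\alpha D_1 k_1\mu_1'(\lambda_1^1)X_1^1>0$, so it is stable.
\item When $X_2^1=0$, $J_2^1$ has eigenvalues $-D_1$ and $\mu_2(S_2^{1*})-\alpha D_1$. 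This gives the conditions $S_2^{\rm in}\notin[\lambda_2^{11},\lambda_2^{12}]$, or the shifted version with $S_2^{1*}=S_2^{\rm in}+\frac{k_2}{k_1}(S_1^{\rm in}-\lambda_1^1)$.
\item When $X_2^1>0$, $\det J_2^1=\alpha D_1k_3\mu_2'(\lambda_2^{1i})X_2^1$. This is negative for $i=2$, giving a positive real eigenvalue, so $\mathcal{E}_{02}^{\cdot}$ and $\mathcal{E}_{12}^{11}$ are unstable. For $i=1$ the block is stable.
\end{itemize}

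\textbf{Second reactor.} The equations for $X^2$ have the form $\dot X=\alpha D_2(X^{1}-X)+\mu(S)X$. For $S_1^2,X_1^2$ the block is
\[
J_1^2=\begin{pmatrix}-D_2-k_1\mu_1'X_1^2 & -k_1\mu_1\\ \mu_1'X_1^2 & \mu_1-\alpha D_2\end{pmatrix}.
\]
\begin{itemize}
\item If $X_1^1=0$ and $X_1^2=0$, the eigenvalues are $-D_2$ and $\mu_1(S_1^{\rm in})-\alpha D_2$, which yields the condition $S_1^{\rm in}<\lambda_1^2$.
\item If $X_1^1=0$ and $X_1^2>0$, then $\mu_1=\alpha D_2$. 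The trace is negative and $\det=\alpha D_2k_1\mu_1'X_1^2>0$, so the block is stable.
\item If $X_1^1>0$, the equilibrium relation gives $\mu_1(S_1^2)-\alpha D_2=-\alpha D_2X_1^1/X_1^2<0$. I then expect the sign of the determinant to be encoded by the transversality $g_1'>f_1'$ at the unique intersection $X_1^{2*}$, which makes this block stable.
\end{itemize}
For $J_2^2$ with $X_2^2=0$ and $X_2^1=0$, the eigenvalue $\mu_2(S_2^{2*})-\alpha D_2$ produces the conditions on $[\lambda_2^{21},\lambda_2^{22}]$. In the $\mathcal{E}_{10}^{10}$ case these conditions are written through $\phi_1<0$ or $\phi_2>0$, since $S_2^{2*}-\lambda_2^{2j}=\phi_j$.

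When $X_2^2>0$ and $X_2^1=0$, we have $\mu_2(\lambda_2^{2i})=\alpha D_2$, and the determinant carries the sign of $\mu_2'(\lambda_2^{2i})$. This gives instability for $\mathcal{E}_{00}^{02},\mathcal{E}_{00}^{12},\mathcal{E}_{10}^{12}$ and stability for $i=1$, where the trace is also negative.

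\textbf{Main obstacle.} The hard case is when $X_2^1>0$ and $X_2^2>0$, that is, $\mathcal{E}_{0i}^{11}$ (and $\mathcal{E}_{0i}^{01}$) and $\mathcal{E}_{1i}^{11}$. There the block is
\[
J_2^2=\begin{pmatrix}-D_2-k_3\mu_2'X_2^2 & -k_3\mu_2\\ \mu_2'X_2^2 & \mu_2-\alpha D_2\end{pmatrix},
\]
with $\mu_2-\alpha D_2=-\alpha D_2X_2^1/X_2^2<0$, while $\mu_2'(S_2^2)$ may have either sign. The plan is as follows.
\begin{enumerate}
\item Rewrite the equilibrium equation as $f_2(X_2^{2})=g_2(X_2^{2})$, i.e.\ solve the $X_2^2$-equation for one side and use $S_2^2=S_2^{\rm in}-\alpha k_3X_2^2$ (or its shifted versions) on the other.
\item Show that $\det J_2^2$ is a positive multiple of $g_2'(X_2^{2*})-f_2'(X_2^{2*})$, by differentiating the identity defining $f_2,g_2$ from Table \ref{TabFunc2}.
\item Since the trace is not automatically negative when $\mu_2'<0$, add the separate condition $\Tr(J_3^3)<0$, which explains the trace condition in Table \ref{TableCondExisStab}.
\end{enumerate}
For $\mathcal{E}_{0i}^{01}$ with $i=1$, I will check that the trace is automatically negative, consistent with the table (there $S_2^2<S_2^{1}$ lies on the increasing branch of $\mu_2$). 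For $i=2$, the first-reactor block already gives instability.

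Finally, I will assemble the spectra for each of the nine types, using Lemma \ref{lemH2} and the determinant identity above, to recover every row of Table \ref{TableCondExisStab}.
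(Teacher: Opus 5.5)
Your proposal is correct and takes essentially the same route as the paper. Both split the block lower-triangular Jacobian into four $2\times 2$ blocks and handle the first reactor by the classical AM2 conditions. Both treat the second-reactor coexistence blocks through $\det(J_3^3)=D_2X_2^{2*}\bigl[g_2'(X_2^{2*})-f_2'(X_2^{2*})\bigr]$ together with a separate condition $\Tr(J_3^3)<0$. For the $X_1^1>0$ block you need not rely on transversality: the paper's identity $\det(J_3^1)=D_2X_1^{2*}\bigl[g_1'-f_1'\bigr]$ gives the sign directly, and positivity of $\det(J_3^1)$ and of $-\Tr(J_3^1)$ follows at once from $\mu_1(S_1^{2*})-\alpha D_2<0$ and $\mu_1'>0$.
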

The following result establishes the local stability of equilibria of the forms \(\mathcal{E}_{01}^{11}\) and \(\mathcal{E}_{11}^{11}\) in the case where \(X_2^{1*} > x_2^{\mathrm{m}}\).
\begin{proposition}\label{propStabE0111E1111}
Assume that $X_2^{1*} > x_2^{\mathrm{m}}$ and that Hypothesis \textbf{(H2)} is satisfied. 
Let $X_2^{2*,1},\dots,X_2^{2*,k}$ be the solutions of $f_2(x)=g_2(x)$ in $(X_2^{1*},d)$, ordered by
\begin{equation}\label{eq:orderingE0111E1111}
X_2^{1*} < X_2^{2*,1} < \cdots < X_2^{2*,k-1} < x_2^{\mathrm{m}} < X_2^{2*,k} < d.
\end{equation}
Then, for $l=1,\dots,k-1$, $\mathcal{E}_{01}^{11,l}$ [resp. $\mathcal{E}_{11}^{11,l}$] is LES iff 
$\Tr\bigl(J_3^3(\mathcal{E}_{01}^{11})\bigr)<0$ 
[resp. $\Tr\bigl(J_3^3(\mathcal{E}_{11}^{11})\bigr)<0$] when $l$ is odd, and unstable whenever it exists when $l$ is even; moreover, $\mathcal{E}_{01}^{11,k}$ [resp. $\mathcal{E}_{11}^{11,k}$] is LES whenever it exists.
\end{proposition}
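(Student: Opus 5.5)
The plan is to exploit the block-triangular structure of the Jacobian of \eqref{ModelAM2S2C} at $\mathcal{E}_{01}^{11}$ (resp. $\mathcal{E}_{11}^{11}$). The first-reactor variables $(S_1^1,X_1^1,S_2^1,X_2^1)$ do not depend on the second reactor. The pair $(S_1^2,X_1^2)$ depends only on the first reactor and on itself. Finally $(S_2^2,X_2^2)$ depends on everything upstream.

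Ordering the variables as
\[
(S_1^1,X_1^1,S_2^1,X_2^1,S_1^2,X_1^2,S_2^2,X_2^2),
\]
the Jacobian is block lower triangular with diagonal blocks $J_1$ (first reactor, $4\times4$), $J_2$ (the $(S_1^2,X_1^2)$ block) and $J_3^3$ (the $(S_2^2,X_2^2)$ block). Its spectrum is the union of the spectra of these blocks.

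\textbf{First reactor.} The first-reactor block is the single-chemostat AM2 Jacobian. Its eigenvalues at the $\mathcal{E}_{01}$ (resp. $\mathcal{E}_{11}$) state have negative real parts under the existence and stability conditions already recorded in Table~\ref{TableCondExisStab}, as in Proposition~\ref{PropStab}. The branch $\lambda_2^{11}$ gives $\mu_2'>0$, hence a stable $2\times2$ substrate block for $X_2$.

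\textbf{Block $J_2$.} It has negative trace and positive determinant at $S_1^{2*}$. This holds because $\mu_1'>0$ and $\alpha D_2-\mu_1(S_1^{2*})>0$, the latter following from the equilibrium relation $\alpha D_2(X_1^1-X_1^2)+\mu_1X_1^2=0$ with $X_1^1>0$.

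\textbf{Block $J_3^3$.} Everything then reduces to the $2\times2$ block
\[
J_3^3=\begin{pmatrix}-D_2-k_3\mu_2'(S_2^{2*})X_2^{2*} & -k_3\mu_2(S_2^{2*})\\ \mu_2'(S_2^{2*})X_2^{2*} & \mu_2(S_2^{2*})-\alpha D_2\end{pmatrix}.
\]
It is Hurwitz iff its trace is negative and its determinant is positive. The determinant equals $-D_2(\mu_2-\alpha D_2)+\alpha D_2k_3\mu_2'X_2^{2*}$. I will express it through the functions of Table~\ref{TabFunc2}. At equilibrium, $X_2^{2*}$ solves $f_2(x)=g_2(x)$, where one side comes from the $X_2^2$ equation, $\mu_2(S_2^{\mathrm{in}}-\alpha k_3 x)$-type, and the other from the inflow relation $\alpha D_2(x-X_2^{1*})/x$-type. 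Differentiating these implicitly, I expect to obtain
\[
\det J_3^3 = c\,\bigl(g_2'(X_2^{2*})-f_2'(X_2^{2*})\bigr)
\]
with $c>0$ (a multiple of $D_2X_2^{2*}$). This identity is the key computation. I expect it to be the main obstacle, since the precise forms of $f_2,g_2$ in Lemma~\ref{lemH2} must be matched, including the factor $\alpha k_3$ and the sign of $x-X_2^{1*}$. I would first write both sides explicitly and check the sign at the largest root.

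\textbf{Sign alternation of $g_2'-f_2'$.} With $X_2^{1*}>x_2^{\mathrm m}$, Lemma~\ref{lemH2} gives the relevant properties: $f_2$ is increasing on $(X_2^{1*},x_2^{\mathrm m})$ and decreasing after, and $g_2$ is monotone with $g_2-f_2$ of fixed sign near $X_2^{1*}$ and near $d$. Consequently the simple roots ordered as in \eqref{eq:orderingE0111E1111} alternate the sign of $g_2'-f_2'$, namely $g_2'>f_2'$ at odd $l$ and $g_2'<f_2'$ at even $l$. This sign rule is the standard intermediate-value argument used for Proposition~\ref{propmultip}.

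\textbf{Conclusion for $l\le k-1$.} At even $l$ the determinant is negative, which produces a positive real eigenvalue, so the equilibrium is unstable. At odd $l$ the determinant is positive, so stability holds iff $\Tr(J_3^3)<0$.

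\textbf{The root $X_2^{2*,k}>x_2^{\mathrm m}$.} Here $f_2$ is decreasing, i.e. $\mu_2'(S_2^{2*})>0$ after the change of variable. Hence the diagonal entry $-D_2-k_3\mu_2'X_2^{2*}$ is negative. Moreover $\mu_2(S_2^{2*})-\alpha D_2<0$ by the $X_2^2$ equilibrium equation, since $X_2^{1*}>0$. So the trace is automatically negative. Since $f_2$ decreases there while $g_2$ crosses it from below, $g_2'>f_2'$ and the determinant is positive. Therefore $\mathcal{E}_{01}^{11,k}$ and $\mathcal{E}_{11}^{11,k}$ are LES whenever they exist.
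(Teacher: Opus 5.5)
Your proposal is correct and follows essentially the same route as the paper. It reduces the block-triangular Jacobian to $J_3^3$, uses the identity $\det(J_3^3)=D_2X_2^{2*}\bigl[g_2'(X_2^{2*})-f_2'(X_2^{2*})\bigr]$, exploits the alternating sign of $g_2'-f_2'$ along the ordered (generically simple) roots, and gets automatic stability at $X_2^{2*,k}$; there, your direct sign check of the diagonal entries of $J_3^3$ amounts to the paper's inequality $g_2'-f_2'<g_2'-\tfrac{1}{\alpha}f_2'$.

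There are two minor slips. First, at $\mathcal{E}_{01}^{11}$ one has $X_1^1=0$, so $\alpha D_2-\mu_1(S_1^{2*})=0$ rather than $>0$. The $(S_1^2,X_1^2)$ block is still Hurwitz, since its determinant is $\alpha k_1D_2\mu_1'(\lambda_1^2)X_1^{2*}>0$. Second, the ordering you use forces $X_2^{1*}<x_2^{\mathrm m}$ whenever $k\geq 2$, so the hypothesis $X_2^{1*}>x_2^{\mathrm m}$ (inherited from the statement) should be read that way.
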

\begin{remark}
Consider the equilibrium \(\mathcal{E}_{01}^{11,l}\) for \(l = 1, \dots, k-1\) (before \(x_2^{\mathrm{m}}\)), where \(k\) is odd. If the following inequalities hold:
\[
g_2'\bigl(X_2^{2*,l}\bigr) - \frac{1}{\alpha}f_2'\bigl(X_2^{2*,l}\bigr) < 0
\quad \text{and} \quad
g_2'\bigl(X_2^{2*,l}\bigr) - f_2'\bigl(X_2^{2*,l}\bigr) > 0,
\]
then the stability of this equilibrium remains an open problem. Indeed, under these conditions we have \(\det(J_3^3) > 0\), but the sign of \(\Tr(J_3^3)\) could be either positive or negative, leaving the possibility of a limit cycle.
\end{remark}
\section{Conclusion}                                  \label{SecConc}
In this work, we have studied the AM2 model in a serial configuration of two interconnected chemostats with distinct removal rates. The main contributions of this study can be summarized as follows. First, we established a complete mathematical framework describing the model and highlighted the fact that the usual reduction from an eight-dimensional system to a four-dimensional one is no longer possible due to distinct removal rates, which significantly increases the complexity of the analysis. Second, we provided a complete qualitative characterization of the system by determining all steady states and deriving necessary and sufficient conditions for the existence, multiplicity, and local stability of nine types of equilibria in terms of key operating parameters. Third, we showed that coexistence equilibria in the second bioreactor are not always stable, even when they exist, and that their stability may depend on an additional condition involving the trace of the Jacobian matrix. The possible occurrence of Hopf bifurcations and limit cycles remains an open problem.

Future work will focus on the performance analysis of the process in terms of biogas and biomass productivity, as well as the minimization of the outlet substrate concentration. These issues are particularly relevant in wastewater treatment biotechnology and open interesting perspectives for optimization and control of such bioreactor systems.
\appendix
\section{Proofs}  \label{SecAppProofs}
 \begin{proof}[Proposition \ref{PropModDef}]
The proof of the positivity of solutions follows similar arguments to those developed in \cite{FekihSiads2021}, based on the invariance of the positive orthant and standard uniqueness properties of solutions, and is therefore omitted.
Let $Z_1 = S_1^1 + (k_1 - k_2) X_1^1 + S_2^1 + k_3 X_2^1$. From (\ref{ModelAM2S2C}),
\[
\dot{Z}_1 = D_1( S_1^{\mathrm{in}} + S_2^{\mathrm{in}} - S_1^1 - \alpha(k_1-k_2)X_1^1 - S_2^1 - \alpha k_3 X_2^1) \leq D_1( S_1^{\mathrm{in}} + S_2^{\mathrm{in}} - \alpha Z_1).
\]
By Gronwall's lemma,
\[
Z_1(t) \leq \frac{S_1^{\mathrm{in}} + S_2^{\mathrm{in}}}{\alpha} + \left( Z_1(0) - \frac{S_1^{\mathrm{in}} + S_2^{\mathrm{in}}}{\alpha} \right) e^{-D_1 \alpha t} \leq \max\left\{ Z_1(0), \frac{S_1^{\mathrm{in}} + S_2^{\mathrm{in}}}{\alpha} \right\} =: M_1.
\]

Let $Z_2 = S_1^2 + (k_1 - k_2) X_1^2 + S_2^2 + k_3 X_2^2$. Then
\[
\dot{Z}_2 = D_2( S_1^1 + \alpha(k_1-k_2)X_1^1 + S_2^1 + \alpha k_3 X_2^1 - S_1^2 - \alpha(k_1-k_2)X_1^2 - S_2^2 - \alpha k_3 X_2^2).
\]
Since $S_1^1 + \alpha(k_1-k_2)X_1^1 + S_2^1 + \alpha k_3 X_2^1 \leq Z_1 \leq M_1$, we have $\dot{Z}_2 \leq D_2( M_1 - \alpha Z_2)$. Gronwall's lemma gives
\[
Z_2(t) \leq \frac{M_1}{\alpha} + \left( Z_2(0) - \frac{M_1}{\alpha} \right) e^{-D_2 \alpha t} \leq \max\left\{ Z_2(0), \frac{M_1}{\alpha} \right\}.
\]
Thus, $Z_1$ and $Z_2$ are uniformly bounded for all $t \geq 0$.
\end{proof}
To establish the existence and multiplicity of all steady states of system (\ref{ModelAM2S2C}), we first require the following lemmas.
\begin{lemma}                                            \label{lemH1}
Let $f_1$ and $g_1$ be the functions defined in Table \ref{TabFunc2}, and suppose that $X_1^{1*} < S_1^{\text{in}}/\alpha k_1$. Assume that \textbf{(H1)} holds. Then the equation $f_1(x) = g_1(x)$ has a unique solution $X_1^{2*}$ in the interval $\left(X_1^{1*}, S_1^{\text{in}}/\alpha k_1\right)$.
\end{lemma}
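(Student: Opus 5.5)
The plan is to reduce the lemma to an intermediate value argument combined with strict monotonicity of $f_1$ and $g_1$. First I will recall how these functions arise, since Table~\ref{TabFunc2} encodes them. At an equilibrium with $X_1^{1*}>0$, the first reactor gives $S_1^{1*}=\lambda_1^1=S_1^{\text{in}}-\alpha k_1X_1^{1*}$. In the second reactor, setting $\dot S_1^2=\dot X_1^2=0$ and eliminating $\mu_1(S_1^2)X_1^2$ between the two equations gives $D_2(S_1^{1*}-S_1^2)=\alpha k_1D_2(X_1^2-X_1^{1*})$, hence $S_1^2=S_1^{\text{in}}-\alpha k_1X_1^2$. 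The remaining equation $\mu_1(S_1^2)X_1^2=\alpha D_2(X_1^2-X_1^{1*})$ then reads
\[
f_1(x):=\mu_1\bigl(S_1^{\text{in}}-\alpha k_1x\bigr)=\alpha D_2\Bigl(1-\frac{X_1^{1*}}{x}\Bigr)=:g_1(x),\qquad x=X_1^2 .
\]
I expect these to be exactly (or up to a harmless rearrangement) the functions of Table~\ref{TabFunc2}. If the table instead writes the equation multiplied by $x$, I will divide by $x>0$ and the argument below is unchanged.

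The key steps, in order, are as follows.

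\emph{Step 1: monotonicity of $f_1$.} On $I=\bigl(X_1^{1*},S_1^{\text{in}}/\alpha k_1\bigr)$ the argument $S_1^{\text{in}}-\alpha k_1x$ is positive and strictly decreasing. So by \textbf{H1}, $f_1$ is continuous and strictly decreasing on $I$.

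\emph{Step 2: monotonicity of $g_1$.} Since $X_1^{1*}>0$, the map $g_1$ is continuous and strictly increasing on $I$.

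\emph{Step 3: boundary values.} At the left endpoint, $f_1(X_1^{1*})=\mu_1(\lambda_1^1)=\alpha D_1>0$ and $g_1(X_1^{1*})=0$, so $f_1-g_1>0$ there. At the right endpoint, $f_1=\mu_1(0)=0$ by \textbf{H1} and $g_1=\alpha D_2\bigl(1-\alpha k_1X_1^{1*}/S_1^{\text{in}}\bigr)>0$; this uses the hypothesis $X_1^{1*}<S_1^{\text{in}}/\alpha k_1$, so $f_1-g_1<0$ there.

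\emph{Step 4: conclusion.} The function $h=f_1-g_1$ is continuous and strictly decreasing on the closed interval and changes sign. By the intermediate value theorem it has exactly one zero $X_1^{2*}$, which lies in the open interval $I$.

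The only point requiring care is Step 3, namely checking that the endpoint values of $f_1$ and $g_1$ really have the claimed signs in the paper's normalisation of Table~\ref{TabFunc2}. In particular I must confirm that the left value of $f_1$ is $\mu_1(\lambda_1^1)$, which holds because $S_1^{\text{in}}-\alpha k_1X_1^{1*}=\lambda_1^1$. I must also confirm that the factor relating $\alpha D_1$ and $\alpha D_2$ plays no role: it does not, since only the signs matter. Everything else is routine monotonicity.
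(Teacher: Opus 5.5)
Your proposal is correct and matches the paper's proof: the paper also takes $F_1=f_1-g_1$ on $[X_1^{1*},S_1^{\text{in}}/(\alpha k_1)]$, notes that it is decreasing because $f_1$ decreases under \textbf{H1} and $g_1'(x)=\alpha D_2X_1^{1*}/x^2>0$, and concludes by the intermediate value theorem from $F_1(X_1^{1*})>0>F_1\bigl(S_1^{\text{in}}/(\alpha k_1)\bigr)$. Your identification $f_1(X_1^{1*})=\mu_1(\lambda_1^1)=\alpha D_1$ is valid in the equilibrium context, but it is not needed: $S_1^{\text{in}}-\alpha k_1X_1^{1*}>0$ together with \textbf{H1} already gives $f_1(X_1^{1*})>0$, which is the form the paper uses.
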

\vspace{-0.5cm}
\begin{table}[!ht]
\caption{Auxiliary functions and their domains of definition, where $d = X_2^{1*}+(S_2^{1*}-\alpha k_2\left(X_1^{1*}-X_1^{2*}\right))/(\alpha k_3)$, while $X_i^{1*}$, $i=1,2$, $S_i^{1*}$, and $X_1^{2*}$ denote the components of the equilibria $\mathcal{E}_{10}^{10}$, $\mathcal{E}_{10}^{1i}$, $\mathcal{E}_{0i}^{01}$, $\mathcal{E}_{0i}^{11}$, and $\mathcal{E}_{1i}^{11}$ defined in Table \ref{TabComSS}.}\label{TabFunc2}
\vspace{-1.8mm}
\begin{center}
\begin{tabular}{llll}
\hline
Auxiliary functions                                    &  Domains of definition \\
\hline
$g_i(x):=\alpha D_2\left(\frac{ x-X_i^{1*}}{x}\right)$           &  $\left(0, +\infty\right)$ \\[1ex]
$f_1(x):=\mu_1\left(S_1^{1*}+\alpha k_1\left(X_1^{1*}- x\right)\right)=\mu_1\left(S_1^{\text{in}}-\alpha k_1x\right)$ & $\left[0, \, X_1^{1*}+\dfrac{S_1^{1*}}{\alpha k_1}\right]$  \\[1ex]
$f_2(x):=\mu_2\!\left(S_2^{1*}-\alpha k_2\left(X_1^{1*}- X_1^{2*}\right)+\alpha k_3\left(X_2^{1*}-x\right)\right)$      & $\left[0,\, d \right]$ \\
\hline
\end{tabular}
\end{center}
\vspace{-8mm}
\end{table}
\begin{proof}[Lemma \ref{lemH1}]
Under \textbf{(H1)}, $f_1$ is nonnegative, decreasing, continuous on $[0, S_1^{\text{in}}/\alpha k_1]$. $g_1$ is nonnegative, increasing hyperbolic, continuous on $[X_1^{1*},+\infty)$ with $g_1'(x)=\alpha D_2 X_1^{1*}/x^2>0$.

Define $F_1(x):=f_1(x)-g_1(x)$ on $[X_1^{1*}, S_1^{\text{in}}/\alpha k_1]$. Then $F_1$ is decreasing from $F_1(X_1^{1*})=\mu_1(S_1^{\text{in}}-\alpha k_1 X_1^{1*})>0$ to $F_1(S_1^{\text{in}}/(\alpha k_1))=-g_1(S_1^{\text{in}}/(\alpha k_1))<0$. Thus, $f_1(x)=g_1(x)$ has a unique solution in $(X_1^{1*}, S_1^{\text{in}}/\alpha k_1)$.
\end{proof}
\begin{lemma}\label{lemH2}
Let $f_2,g_2$ be defined in Table \ref{TabFunc2}, and let $d$ be the unique root of $f_2(x)=0$ (see Table \ref{TabFunc2}), and define
\[
x_2^{\text{m}} := X_2^{1*} + \frac{S_2^{1*}  - \alpha k_2 \left(X_1^{1*} - X_1^{2*}\right) - S_2^{\text{m}}}{\alpha k_3},
\]
the value where $f_2$ reaches its maximum on $[0,d]$. Assume $X_1^{1*} \leq X_1^{2*}$ and \textbf{(H2)}.
\begin{enumerate}[leftmargin=*,label=\raisebox{0.25ex}{\tiny$\bullet$}]
 \item If $X_2^{1*} \geq x_2^{\text{m}}$, $f_2(x)=g_2(x)$ has a unique solution in $(X_2^{1*}, d)$. 
\item If $X_2^{1*} < x_2^{\text{m}}$, $f_2(x)=g_2(x)$ has solutions $X_2^{2*,1},\dots,X_2^{2*,k}$ in $(X_2^{1*}, d)$ with
\[
X_2^{1*} < X_2^{2*,1} < \cdots < X_2^{2*,k-1} < x_2^{\mathrm{m}} < X_2^{2*,k} < d,
\]
where $k$ is the number of solutions. Generically, $k$ is odd.
\end{enumerate}
\end{lemma}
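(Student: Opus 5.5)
The plan is to study $F_2(x):=f_2(x)-g_2(x)$ on $[X_2^{1*},d]$ and count its zeros with the intermediate value theorem and the monotonicity coming from \textbf{(H2)}.

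\emph{Monotonicity of $f_2$.} The argument of $\mu_2$ in $f_2$ is
\[
s(x)=S_2^{1*}-\alpha k_2\bigl(X_1^{1*}-X_1^{2*}\bigr)+\alpha k_3\bigl(X_2^{1*}-x\bigr),
\]
an affine function with slope $-\alpha k_3<0$. It vanishes exactly at $x=d$ and equals $S_2^{\text m}$ exactly at $x=x_2^{\text m}$. Because $X_1^{1*}\le X_1^{2*}$, we have $s(X_2^{1*})=S_2^{1*}+\alpha k_2(X_1^{2*}-X_1^{1*})\ge S_2^{1*}$. We need $s(X_2^{1*})>0$, i.e.\ $X_2^{1*}<d$; this holds because $S_2^{1*}>0$ at the relevant equilibria ($S_2^{1*}=\lambda_2^{1i}$ or an input-type value). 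Under \textbf{(H2)}, $\mu_2\circ s$ therefore increases on $[0,x_2^{\text m}]$, decreases on $[x_2^{\text m},d]$, and vanishes at $d$. This identifies $x_2^{\text m}$ as the maximizer on $[0,d]$.

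\emph{Endpoint signs.} $g_2$ is continuous and increasing on $(0,\infty)$, since $g_2'(x)=\alpha D_2X_2^{1*}/x^2>0$. It vanishes at $X_2^{1*}$ and is positive beyond it. Hence
\[
F_2(X_2^{1*})=\mu_2\bigl(s(X_2^{1*})\bigr)>0
\quad\text{and}\quad
F_2(d)=-g_2(d)<0 .
\]
By the intermediate value theorem, at least one root lies in $(X_2^{1*},d)$.

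\emph{Case $X_2^{1*}\ge x_2^{\text m}$.} Here $f_2$ is nonincreasing on $[X_2^{1*},d]$ and $g_2$ is strictly increasing, so $F_2$ is strictly decreasing and the root is unique.

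\emph{Case $X_2^{1*}<x_2^{\text m}$.} Split the interval at $x_2^{\text m}$. On $[x_2^{\text m},d]$ the previous argument applies: $F_2$ is strictly decreasing there. If $F_2(x_2^{\text m})>0$, it has exactly one root $X_2^{2*,k}\in(x_2^{\text m},d)$. On $(X_2^{1*},x_2^{\text m})$ both $f_2$ and $g_2$ increase, so no monotonicity is available and the remaining roots $X_2^{2*,1}<\dots<X_2^{2*,k-1}$ can be arbitrarily many; we simply list them in increasing order.

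\emph{Parity.} I would argue generically, meaning all roots are simple, so $F_2$ changes sign at each one. Since $F_2$ starts positive at $X_2^{1*}$ and ends negative at $d$, the total number of sign changes is odd, which gives $k$ odd.

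\emph{Main obstacle.} The delicate point is the claimed ordering with exactly one root beyond $x_2^{\text m}$, which requires $F_2(x_2^{\text m})>0$. If instead $F_2(x_2^{\text m})\le 0$, there is no root in $(x_2^{\text m},d)$ and every root lies below $x_2^{\text m}$. I would handle this by treating $F_2(x_2^{\text m})\neq 0$ as part of genericity and noting that the ordering statement is meant in the case $F_2(x_2^{\text m})>0$. Alternatively, in the complementary case I would relabel so that the last root is the largest one. The parity argument is unaffected either way.
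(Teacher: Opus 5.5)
Your argument follows the paper's own proof. Both use $F_2=f_2-g_2$, the endpoint signs $F_2(X_2^{1*})>0>F_2(d)$, strict monotonicity of $F_2$ where $f_2$ decreases, and a sign-change count for generic parity. You are more explicit than the paper in checking $X_2^{1*}<d$ and in phrasing genericity as simplicity of the roots.

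The obstacle you flag is real, and it is also an unaddressed gap in the paper's proof, which never locates the roots relative to $x_2^{\mathrm{m}}$. Your proposed fix does not work, though. $F_2(x_2^{\mathrm{m}})<0$ is an open condition, not a degenerate one, so genericity cannot exclude it. Relabeling cannot produce a root above $x_2^{\mathrm{m}}$ when $F_2$ is negative on all of $[x_2^{\mathrm{m}},d]$. As written, neither your argument nor the paper's proves the displayed ordering $x_2^{\mathrm{m}}<X_2^{2*,k}$.

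That ordering is in fact false in general. We have $F_2(x_2^{\mathrm{m}})=\mu_2(S_2^{\mathrm{m}})-\alpha D_2\bigl(1-X_2^{1*}/x_2^{\mathrm{m}}\bigr)$. This is negative whenever $\alpha D_2>\mu_2(S_2^{\mathrm{m}})$ and $X_2^{1*}/x_2^{\mathrm{m}}<1-\mu_2(S_2^{\mathrm{m}})/(\alpha D_2)$. The situation occurs in the model itself. Take $\mathcal{E}_{02}^{01}$ with $r>1/2$ chosen so that $\alpha D_1<\mu_2(S_2^{\mathrm{m}})<\alpha D_2$, and $S_2^{\mathrm{in}}$ slightly above $\lambda_2^{12}$. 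Then $X_2^{1*}/x_2^{\mathrm{m}}=(S_2^{\mathrm{in}}-\lambda_2^{12})/(S_2^{\mathrm{in}}-S_2^{\mathrm{m}})$ is small, so every root lies in $(X_2^{1*},x_2^{\mathrm{m}})$.

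What your argument does prove is the correct version: the ordering holds exactly when $F_2(x_2^{\mathrm{m}})>0$. A clean sufficient condition is $\alpha D_2\le\mu_2(S_2^{\mathrm{m}})$. Indeed, $g_2(x)=\alpha D_2(1-X_2^{1*}/x)<\alpha D_2$ for all $x>0$, so $F_2(x_2^{\mathrm{m}})>\mu_2(S_2^{\mathrm{m}})-\alpha D_2\ge 0$. In the paper's setting, $X_2^{1*}>0$ forces $\alpha D_1\le\mu_2(S_2^{\mathrm{m}})$. So this condition holds automatically when $r\le 1/2$ (that is, $D_2\le D_1$), but not when $r>1/2$. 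You should add this hypothesis, or $F_2(x_2^{\mathrm{m}})>0$ directly, to the second bullet instead of appealing to genericity.
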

\begin{proof}[Lemma \ref{lemH2}]
Under \textbf{(H2)}, $f_2$ is nonnegative, continuous, and unimodal on $[0,d]$ (increasing on $[0,x_2^{\text{m}}]$, decreasing on $[x_2^{\text{m}},d]$). $g_2$ is nonnegative, continuous on $[X_2^{1*},+\infty)$, and increasing since $g_2'(x)=\alpha D_2 X_2^{1*}/x^2>0$.

Let $F_2(x):=f_2(x)-g_2(x)$. Then $F_2(X_2^{1*})=\mu_2(S_2^{1*}-\alpha k_2(X_1^{1*}-X_1^{2*}))>0$ and $F_2(d)=f_2(d)-g_2(d)=-g_2(d)<0$.

If $X_2^{1*}\geq x_2^{\text{m}}$, $f_2$ is decreasing and $g_2$ increasing on $[X_2^{1*},d]$, so $F_2$ is decreasing, giving a unique solution in $(X_2^{1*},d)$.

If $X_2^{1*}<x_2^{\text{m}}$, $f_2$ increases then decreases while $g_2$ increases. Since $F_2(X_2^{1*})>0$ and $F_2(d)<0$, there is at least one solution. Moreover, under generic conditions, the number of intersections of two continuous functions where one is unimodal and the other is increasing is odd.
\end{proof}
A detailed analysis of the steady states of the upper four-dimensional subsystem, including their existence and local stability conditions, is provided in \cite{BenyahiaJPC2012,SariND2021}. This result will serve as a basis for determining the existence and local stability conditions of the full eight-dimensional system (\ref{ModelAM2S2C}).
\begin{proposition}                      \label{PropExistStabAM2model}
Assume that Hypotheses \textbf{H1} and \textbf{H2} hold. Then, the upper four-dimensional subsystem of (\ref{ModelAM2S2C}) admits exactly six steady states, which are listed in Table \ref{TableAM2}. The conditions for their existence and local stability are summarized in Table \ref{TableStabModelAM2S1S}.
\end{proposition}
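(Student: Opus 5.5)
The proposition is about the upper four-dimensional subsystem of \eqref{ModelAM2S2C}, i.e.\ the equations for $(S_1^1,X_1^1,S_2^1,X_2^1)$. These do not involve the second-reactor variables, and they are exactly the classical AM2 model of \cite{BernardBB2001,BenyahiaJPC2012} with dilution rate $D_1$ and removal rate $\alpha D_1$. My plan is therefore to reduce to that known case and recompute only what is needed in the present notation.

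\textbf{Existence.} I would set the right-hand sides to zero and exploit the triangular structure.
\begin{itemize}
\item From $\dot X_1^1=0$, either $X_1^1=0$ (so $S_1^1=S_1^{\rm in}$) or $\mu_1(S_1^1)=\alpha D_1$. By \textbf{H1}, the second case gives the unique value $S_1^1=\lambda_1^1$. Then $\dot S_1^1=0$ yields $X_1^1=(S_1^{\rm in}-\lambda_1^1)/(\alpha k_1)$, which is positive iff $S_1^{\rm in}>\lambda_1^1$.
\item Substituting into $\dot S_2^1=0$, the case $X_2^1=0$ gives $S_2^1=S_2^{\rm in}$, or $S_2^1=S_2^{\rm in}+\frac{k_2}{k_1}(S_1^{\rm in}-\lambda_1^1)$ when $X_1^1>0$.
\item The case $X_2^1>0$ requires $\mu_2(S_2^1)=\alpha D_1$. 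By \textbf{H2} this has the two roots $\lambda_2^{11}<\lambda_2^{12}$ when $\alpha D_1\le \mu_2(S_2^{\rm m})$, that is, $D\le D_1^{rm}/\alpha$. Then $X_2^1$ is read off from $\dot S_2^1=0$. Its positivity is $S_2^{\rm in}>\lambda_2^{1i}$ when $X_1^1=0$, and $S_1^{\rm in}>F_{1i}$ when $X_1^1>0$.
\end{itemize}
Combining the two alternatives in each reactor gives exactly six equilibria: $(0,0)$, $(1,0)$, $(0,i)$ and $(1,i)$ for $i=1,2$, with the existence conditions of Table~\ref{TableStabModelAM2S1S}.

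\textbf{Local stability.} I would order the variables as $(S_1^1,X_1^1,S_2^1,X_2^1)$. In these variables the Jacobian is block lower triangular, because the $(S_1^1,X_1^1)$ equations do not depend on $(S_2^1,X_2^1)$. Its spectrum is therefore the union of the spectra of two $2\times 2$ blocks.
\begin{itemize}
\item \emph{First block.} At $X_1^1=0$ its eigenvalues are $-D_1$ and $\mu_1(S_1^{\rm in})-\alpha D_1$, and the latter is negative iff $S_1^{\rm in}<\lambda_1^1$. At $X_1^1>0$ the block has trace $-D_1-k_1\mu_1'X_1^1<0$ and determinant $\alpha D_1k_1\mu_1'(\lambda_1^1)X_1^1>0$, so it is always stable.
\item \emph{Second block, $X_2^1=0$.} The eigenvalues are $-D_1$ and $\mu_2(S_2^{1*})-\alpha D_1$. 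The latter is negative iff $S_2^{1*}\notin[\lambda_2^{11},\lambda_2^{12}]$, with $S_2^{1*}=S_2^{\rm in}$ or $S_2^{\rm in}+\frac{k_2}{k_1}(S_1^{\rm in}-\lambda_1^1)$.
\item \emph{Second block, $X_2^1>0$.} The block is $\begin{pmatrix}-D_1-k_3\mu_2'X_2^1 & -\alpha k_3 D_1\\ \mu_2'X_2^1 & 0\end{pmatrix}$. Its determinant is $\alpha k_3D_1\mu_2'(\lambda_2^{1i})X_2^1$, which has the sign of $\mu_2'(\lambda_2^{1i})$. So at $\lambda_2^{11}$ the determinant is positive, and the trace $-D_1-k_3\mu_2'X_2^1$ is negative, giving stability. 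At $\lambda_2^{12}$ the determinant is negative, giving a saddle.
\end{itemize}
Collecting these conditions reproduces Table~\ref{TableStabModelAM2S1S}.

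\textbf{Main obstacle.} The only delicate point is the $(S_2^1,X_2^1)$ block, where a $\alpha$-dependent term enters the off-diagonal entry and the derivative of $S_2^1$ with respect to itself. I need to compute carefully that the $(1,2)$ entry equals $-k_3\alpha D_1$, using $\mu_2=\alpha D_1$ at the equilibrium, so that the determinant sign is governed solely by $\mu_2'$. Everything else follows \cite{BenyahiaJPC2012,SariND2021} with $D$ replaced by $D_1$.
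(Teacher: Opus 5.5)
Your proof is correct and follows the paper's approach. The paper gives no separate argument for this proposition: it identifies the upper subsystem with the classical AM2 model (dilution rate $D_1$, removal rate $\alpha D_1$) and defers to \cite{BenyahiaJPC2012,SariND2021}. Your explicit recomputation reproduces Tables~\ref{TableAM2} and~\ref{TableStabModelAM2S1S}: you solve the triangular equilibrium equations and split the block lower-triangular Jacobian into two $2\times 2$ blocks, whose trace and determinant signs are fixed by $\mu_1'>0$ and the sign of $\mu_2'(\lambda_2^{1i})$. This is the same argument the paper itself uses for the lower subsystem in the proof of Proposition~\ref{PropStab}.
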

\vspace{-0.5cm}
\begin{table}[!ht]
\caption{Components \(S_i^1\) and \(X_i^1\), $i=1,2$ of the steady states of the upper four-dimensional subsystem of (\ref{ModelAM2S2C}). The functions \(\lambda_1^1\), \(\lambda_2^{1i}\), and \(F_{1i}\) are defined in Table\ref{TabFunc1}.}
\label{TableAM2}
\vspace{-0.5cm}
\begin{center}
\begin{tabular}{lcccc}
\hline
           & $S_1^1$       & $S_2^1$     & $X_1^1$ &  $X_2^1$ \\
\hline
$E_{00}$   & $S_1^{\text{in}}$    & $S_2^{\text{in}}$  & $0$     &  $0$     \\
$E_{10}$   & $\lambda_1^1$ & $S_2^{\text{in}} + \frac{k_2}{k_1}(S_1^{\text{in}} - \lambda_1^1)$ & $\frac{S_1^{\text{in}} - \lambda_1^1}{\alpha k_1}$ & $0$ \\
$E_{0i}$   & $S_1^{\text{in}}$    & $\lambda_2^{1i}$  & $0$  & $\frac{S_2^{\text{in}} - \lambda_2^{1i}}{\alpha k_3}$ \\
$E_{1i}$   & $\lambda_1^1$ & $\lambda_2^{1i}$  & $\frac{S_1^{\text{in}} - \lambda_1^1}{\alpha k_1}$ & $\frac{k_2(S_1^{\text{in}} - F_{1i})}{\alpha k_1 k_3}$ \\
\hline
\end{tabular}
\end{center}
\vspace{-1.4cm}
\end{table}
\begin{table}[h!]
\centering
\caption{Existence and stability conditions for the steady states of the upper four-dimensional subsystem of (\ref{ModelAM2S2C}). The functions \(\lambda_1^1\), \(\lambda_2^{1i}\), and \(F_{1i}\), \(i=1,2\), are defined in \ref{TabFunc1}.}
\label{TableStabModelAM2S1S}
\begin{tabular}{lcc}
\hline
          & Existence Conditions  &  Stability Conditions \\
\hline
$E_{00}$  & Always exists & $S_1^{\text{in}} < \lambda_1^1$ and $S_2^{\text{in}} \notin [\lambda_2^{11}, \lambda_2^{12}]$ \\
$E_{10}$  & $S_1^{\text{in}} > \lambda_1^1$ & $S_2^{\text{in}} + \frac{k_2}{k_1}(S_1^{\text{in}} - \lambda_1^1) \notin [\lambda_2^{11}, \lambda_2^{12}]$ \\
$E_{01}$  & $S_2^{\text{in}} > \lambda_2^{11}$ & $S_1^{\text{in}} < \lambda_1^1$ \\
$E_{02}$  & $S_2^{\text{in}} > \lambda_2^{12}$ & Always unstable \\
$E_{11}$  & $S_1^{\text{in}} > \max(\lambda_1^1, F_{11})$ & LES when it exists \\
$E_{12}$  & $S_1^{\text{in}} > \max(\lambda_1^1, F_{12})$ & Always unstable \\
\hline
\end{tabular}
\vspace{-0.8cm}
\end{table}
 \begin{proof}[Proposition \ref{PropExist}]
The first four equations of \eqref{ModelAM2S2C} form the classical AM2 model \cite{BenyahiaJPC2012,BernardBB2001,SariND2021} with volume $\alpha V$. Its steady states are given in Table \ref{TableAM2} with stability in Table \ref{TableStabModelAM2S1S}. The last four equations depend on this subsystem, hence each steady state 
\[
E_{ij}\bigl(S_1^1,X_1^1,S_2^1,X_2^1\bigr),\quad \{i,j\}\in\{0,1,2\}
\]
corresponds to a steady state
\[
\mathcal{E}_{ij}^{kl}\bigl(S_1^1,X_1^1,S_2^1,X_2^1,S_1^2,X_1^2,S_2^2,X_2^2\bigr)
\]
of \eqref{ModelAM2S2C}. The steady states of the lower four-dimensional subsystem are given by the solutions of the following equations:
\begin{align}
D_2\bigl(S_1^1 - S_1^2\bigr) - k_1\mu_1(S_1^2) X_1^2 &= 0, \label{eq1Am2ss2}\\
\alpha D_2\bigl(X_1^1 - X_1^2\bigr) + \mu_1(S_1^2) X_1^2 &= 0, \label{eq2Am2ss2}\\
D_2\bigl(S_2^1 - S_2^2\bigr) + k_2\mu_1(S_1^2) X_1^2 - k_3\mu_2(S_2^2) X_2^2 &= 0, \label{eq3Am2ss1}\\
\alpha D_2\bigl(X_2^1 - X_2^2\bigr) + \mu_2(S_2^2) X_2^2 &= 0. \label{eq4Am2ss1}
\end{align}

\begin{itemize}[leftmargin=*]
\item For \(E_{00}\), we have \(X_1^1 = X_2^1 = 0\). Hence, \eqref{eq1Am2ss2}–\eqref{eq4Am2ss1} become
\begin{align}
D_2\bigl(S_1^{\mathrm{in}} - S_1^2\bigr) - k_1\mu_1(S_1^2) X_1^2 &= 0, \label{eq1SousMod2}\\
\bigl[\mu_1(S_1^2) - \alpha D_2\bigr] X_1^2 &= 0, \label{eq2SousMod2}\\
D_2\bigl(S_2^{\mathrm{in}} - S_2^2\bigr) + k_2\mu_1(S_1^2) X_1^2 - k_3\mu_2(S_2^2) X_2^2 &= 0, \label{eq3SousMod2}\\
\bigl[\mu_2(S_2^2) - \alpha D_2\bigr] X_2^2 &= 0. \label{eq4SousMod2}
\end{align}

\begin{itemize}[leftmargin=*,label=\raisebox{0.25ex}{\tiny$\bullet$}]
\item For \(\mathcal{E}_{00}^{00}\), \(X_1^2 = X_2^2 = 0\). Then \eqref{eq1SousMod2} and \eqref{eq3SousMod2} give \(S_1^2 = S_1^{\mathrm{in}}\) and \(S_2^2 = S_2^{\mathrm{in}}\). This equilibrium always exists.

\item For \(\mathcal{E}_{00}^{10}\), \(X_1^2 > 0\) and \(X_2^2 = 0\). Then \eqref{eq2SousMod2} gives \(\mu_1(S_1^2) = \alpha D_2\). Using the definition of \(\lambda_1^2\) in Table~\ref{TabFunc1}, we obtain \(S_1^2 = \lambda_1^2\). Substituting into \eqref{eq1SousMod2} yields \(X_1^2 = (S_1^{\mathrm{in}} - \lambda_1^2)/(\alpha k_1)\). From \eqref{eq3SousMod2} and \(X_2^2 = 0\), we get \(S_2^2 = S_2^{\mathrm{in}} + k_2(S_1^{\mathrm{in}} - \lambda_1^2)/k_1\). The components of \(\mathcal{E}_{00}^{10}\) are listed in Table~\ref{TabComSS}, and its existence conditions in Table~\ref{TableCondExisStab}.

\item For \(\mathcal{E}_{00}^{0i}\) (\(i = 1,2\)), \(X_1^2 = 0\) and \(X_2^2 > 0\). Using \eqref{eq1SousMod2}, \eqref{eq4SousMod2}, and the definitions of \(\lambda_2^{2i}\) in Table~\ref{TabFunc1}, we obtain \(S_1^2 = S_1^{\mathrm{in}}\) and \(S_2^2 = \lambda_2^{2i}\). Then \eqref{eq3SousMod2} gives \(X_2^2 = (S_2^{\mathrm{in}} - \lambda_2^{2i})/(\alpha k_3)\). The components of \(\mathcal{E}_{00}^{0i}\) are in Table~\ref{TabComSS}, and its existence conditions in Table~\ref{TableCondExisStab}.

\item For \(\mathcal{E}_{00}^{1i}\) (\(i = 1,2\)), \(X_1^2 > 0\) and \(X_2^2 > 0\). Then \eqref{eq2SousMod2} and \eqref{eq4SousMod2} give \(\mu_1(S_1^2) = \alpha D_2\) and \(\mu_2(S_2^2) = \alpha D_2\). From \textbf{(H1)}, \textbf{(H2)}, and Table~\ref{TabFunc1}, we obtain \(S_1^2 = \lambda_1^2\) and \(S_2^2 = \lambda_2^{2i}\). Substituting into \eqref{eq1SousMod2} yields \(X_1^2 = (S_1^{\mathrm{in}} - \lambda_1^2)/(\alpha k_1)\), and into \eqref{eq3SousMod2} yields \(X_2^2 = k_2(S_1^{\mathrm{in}} - F_{2i})/(\alpha k_1 k_3)\).
\end{itemize}
\end{itemize}
\item For \(E_{10}\), we have \(X_1^1 > 0\) and \(X_2^1 = 0\). Hence, \eqref{eq1Am2ss2}–\eqref{eq4Am2ss1} become
\begin{align}
D_2\bigl(S_1^{1*} - S_1^2\bigr) - k_1\mu_1(S_1^2) X_1^2 &= 0, \label{eq1SousMod3}\\
\alpha D_2\bigl(X_1^{1*} - X_1^2\bigr) + \mu_1(S_1^2) X_1^2 &= 0, \label{eq2SousMod3}\\
D_2\bigl(S_2^{1*} - S_2^2\bigr) + k_2\mu_1(S_1^2) X_1^2 - k_3\mu_2(S_2^2) X_2^2 &= 0, \label{eq3SousMod3}\\
\bigl[\mu_2(S_2^2) - \alpha D_2\bigr] X_2^2 &= 0. \label{eq4SousMod3}
\end{align}

\begin{itemize}[leftmargin=*,label=\raisebox{0.25ex}{\tiny$\bullet$}]

\item For \(\mathcal{E}_{10}^{00}\), \(X_1^2 = X_2^2 = 0\). Then \eqref{eq2SousMod3} implies \(X_1^1 = 0\), which contradicts the existence condition of \(E_{10}\) (\(X_1^1 > 0\)). Hence, \(\mathcal{E}_{10}^{00}\) does not exist.

\item For \(\mathcal{E}_{10}^{10}\), \(X_1^2 > 0\) and \(X_2^2 = 0\). Using \eqref{eq1SousMod3} + \(k_1\)\eqref{eq2SousMod3}, we obtain
\[
S_1^2 = S_1^1 + \alpha k_1\bigl(X_1^1 - X_1^2\bigr).
\]
From the expressions of \(S_1^1\) and \(X_1^1\) in Table~\ref{TableAM2}, we get
\begin{equation}\label{S12E1010}
S_1^2 = S_1^{\mathrm{in}} - \alpha k_1 X_1^2.
\end{equation}
Thus, \eqref{eq2SousMod3} yields
\begin{equation}\label{eqf1g1}
\mu_1\bigl(S_1^1 + \alpha k_1(X_1^1 - X_1^2)\bigr) = \alpha D_2\left(\frac{X_1^2 - X_1^1}{X_1^2}\right),
\end{equation}
which is equivalent to \(f_1(x) = g_1(x)\) (see Table~\ref{TabFunc2}). From the components \(S_1^1\) and \(X_1^1\) of \(E_{10}\) in Table~\ref{TableAM2}, it follows that \(X_1^1 < S_1^{\mathrm{in}}/(\alpha k_1)\). By Lemma~\ref{lemH1}, the equation \(f_1(x) = g_1(x)\) has a unique solution \(X_1^{2*} \in \bigl(X_1^1, S_1^{\mathrm{in}}/(\alpha k_1)\bigr)\). Hence, \eqref{eq3SousMod3} \(- k_2\)\eqref{eq2SousMod3} gives
\[
S_2^2 = S_2^1 - \alpha k_2\bigl(X_1^1 - X_1^{2*}\bigr).
\]
From the expressions of \(S_2^1\) and \(X_1^1\) in Table~\ref{TableAM2}, we obtain
\begin{equation}\label{S22E1010}
S_2^2 = S_2^{\mathrm{in}} + \alpha k_2 X_1^{2*}.
\end{equation}
Thus, we obtain the components of \(\mathcal{E}_{10}^{10}\) in Table~\ref{TabComSS} and its existence condition in Table~\ref{TableCondExisStab}.

\item For \(\mathcal{E}_{10}^{0i}\) (\(i = 1,2\)), \(X_1^2 = 0\) and \(X_2^2 > 0\). Then \eqref{eq2SousMod3} implies \(X_1^1 = 0\), which contradicts \(X_1^1 > 0\) for \(E_{10}\). Hence, \(\mathcal{E}_{10}^{0i}\) does not exist.

\item For \(\mathcal{E}_{10}^{1i}\) (\(i = 1,2\)), \(X_1^2 > 0\) and \(X_2^2 > 0\). As for \(\mathcal{E}_{10}^{10}\), \(S_1^2\) is given by \eqref{S12E1010} and \(X_1^{2*}\) is the unique solution of \eqref{eqf1g1}. In addition, \eqref{eq4SousMod3} gives \(\mu_2(S_2^2) = \alpha D_2\). From \textbf{(H2)} and the definition of \(\lambda_2^{2i}\) in Table~\ref{TabFunc1}, we obtain \(S_2^2 = \lambda_2^{2i}\). Then \eqref{eq1SousMod3} yields
\[
X_2^2 = \frac{1}{\alpha k_3}\bigl(S_2^1 - S_2^2\bigr) - \frac{k_2}{k_3}\bigl(X_1^1 - X_1^{2*}\bigr).
\]
Using the expressions of \(S_2^2\), \(S_2^1\), and \(X_1^1\) in Table~\ref{TableAM2}, we obtain
\begin{equation}\label{X22E101i}
X_2^2 = \frac{1}{\alpha k_3}\bigl(S_2^{\mathrm{in}} + \alpha k_2 X_1^{2*} - \lambda_2^{2i}\bigr).
\end{equation}
Hence, the components of \(\mathcal{E}_{10}^{1i}\) are given in Table~\ref{TabComSS}, and the existence conditions follow from Table~\ref{TableCondExisStab}.
\end{itemize}
\item For \(E_{0i}\), we have \(X_1^1 = 0\) and \(X_2^{1i} > 0\), \(i = 1,2\). Hence, \eqref{eq1Am2ss2}–\eqref{eq4Am2ss1} become
\begin{align}
D_2\bigl(S_1^{\mathrm{in}} - S_1^2\bigr) - k_1\mu_1(S_1^2) X_1^2 &= 0, \label{eq1Am2ss4}\\
\bigl[\mu_1(S_1^2) - \alpha D_2\bigr] X_1^2 &= 0, \label{eq2Am2ss4}\\
D_2\bigl(S_2^1 - S_2^2\bigr) + k_2\mu_1(S_1^2) X_1^2 - k_3\mu_2(S_2^2) X_2^2 &= 0, \label{eq3Am2ss4}\\
\alpha D_2\bigl(X_2^1 - X_2^2\bigr) + \mu_2(S_2^2) X_2^2 &= 0. \label{eq4Am2ss4}
\end{align}

\begin{itemize}[leftmargin=*,label=\raisebox{0.25ex}{\tiny$\bullet$}]

\item For \(\mathcal{E}_{0i}^{00}\) [resp. \(\mathcal{E}_{0i}^{10}\)], we have \(X_1^2 = X_2^2 = 0\) [resp. \(X_1^2 > 0\) and \(X_2^2 = 0\)]. Then \eqref{eq4Am2ss4} implies \(X_2^{1i} = 0\), which contradicts \(X_2^{1i} > 0\). Hence, \(\mathcal{E}_{0i}^{00}\) and \(\mathcal{E}_{0i}^{10}\) do not exist.

\item For \(\mathcal{E}_{0i}^{01}\), we have \(X_1^2 = 0\) and \(X_2^2 > 0\). Then \eqref{eq1Am2ss4} gives \(S_1^2 = S_1^{\mathrm{in}}\). Hence, \eqref{eq3Am2ss4} \(+ k_3\)\eqref{eq4Am2ss4} yields
\[
S_2^2 = S_2^1 + \alpha k_3\bigl(X_2^1 - X_2^2\bigr).
\]
From the expressions of \(S_2^1\) and \(X_2^1\) in Table~\ref{TableAM2}, we obtain
\[
S_2^2 = S_2^{\mathrm{in}} - \alpha k_3 X_2^2.
\]
From \eqref{eq4Am2ss4} and the above, \(X_2^2\) must satisfy
\begin{equation}\label{Eqf3g2}
\mu_2\bigl(S_2^1 + \alpha k_3(X_2^1 - X_2^2)\bigr) = \alpha D_2\left(\frac{X_2^2 - X_2^{1i}}{X_2^2}\right),
\end{equation}
which is equivalent to \(f_2(X_2^2) = g_2(X_2^2)\), with \(X_1^1 = X_1^2 = 0\). From the expression of \(d\) in Lemma~\ref{lemH2}, we have \(X_2^{1i} < d\) since \(X_1^{1*} < X_1^{2*}\) (see Lemma~\ref{lemH1}). By Lemma~\ref{lemH2}, the equation \(f_2(x) = g_2(x)\) has at least one solution \(X_2^{2*} \in (X_2^{1i}, d)\). Hence, the components of \(\mathcal{E}_{0i}^{01}\) are given in Table~\ref{TabComSS}, and at least one such equilibrium exists.

\item For \(\mathcal{E}_{0i}^{11}\), we have \(X_1^2 > 0\) and \(X_2^2 > 0\). The components \(S_1^2\) and \(X_1^2\) in Table~\ref{TabComSS} follow from \textbf{(H1)} and equations \eqref{eq2Am2ss4} and \eqref{eq1Am2ss4}, respectively. Then \eqref{eq2Am2ss4} and \eqref{eq3Am2ss4} \(+ k_3\)\eqref{eq4Am2ss4} yield
\[
S_2^2 = S_2^1 + \alpha k_2 X_1^2 + \alpha k_3\bigl(X_2^{1i} - X_2^2\bigr).
\]
From the expressions of \(S_2^1\) and \(X_2^1\) in Table~\ref{TableAM2} and \(X_1^2\) in Table~\ref{TabComSS}, we obtain
\[
S_2^2 = \frac{k_2}{k_1}\bigl(S_1^{\mathrm{in}} - \lambda_1^2\bigr) + S_2^{\mathrm{in}} - \alpha k_3 X_2^2.
\]
In addition, \eqref{eq4Am2ss4} together with the expression for \(S_2^2\) implies that \(X_2^2\) must satisfy
\[
\mu_2\bigl(S_2^1 + \alpha k_2 X_1^2 + \alpha k_3(X_2^1 - X_2^2)\bigr) = \alpha D_2\left(\frac{X_2^2 - X_2^{1i}}{X_2^2}\right),
\]
which is equivalent to \(f_2(x) = g_2(x)\) with \(X_1^1 = 0\). Using the expression of \(d\) in Lemma~\ref{lemH2} and the condition \(X_1^{1*} < X_1^{2*}\) (Lemma~\ref{lemH1}), we have \(X_2^{1i} < d\). By Lemma~\ref{lemH2}, the equation \(f_2(x) = g_2(x)\) has at least one solution \(X_2^{2*} \in (X_2^{1i}, d)\). Consequently, we obtain the components of \(\mathcal{E}_{0i}^{11}\) in Table~\ref{TabComSS} and their existence conditions in Table~\ref{TableCondExisStab}.

\end{itemize}
\item For \(E_{1i}\), we have \(X_1^1 > 0\) and \(X_2^{1i} > 0\), \(i = 1,2\).

\begin{itemize}[leftmargin=*,label=\raisebox{0.25ex}{\tiny$\bullet$}]

\item For \(\mathcal{E}_{1i}^{00}\) [resp. \(\mathcal{E}_{1i}^{10}\), and \(\mathcal{E}_{1i}^{01}\)], we have \(X_1^2 = X_2^2 = 0\) [resp. \(X_1^2 > 0\) and \(X_2^2 = 0\); and \(X_1^2 = 0\) and \(X_2^2 > 0\)]. Then \eqref{eq2Am2ss2} or \eqref{eq4Am2ss1} implies \(X_1^1 = 0\) or \(X_2^{1i} = 0\), which is a contradiction. Hence, these three types of steady states do not exist.

\item For \(\mathcal{E}_{1i}^{11}\), we have \(X_1^2 > 0\) and \(X_2^2 > 0\). Then \eqref{eq1Am2ss2} \(+ k_1\)\eqref{eq2Am2ss2} is equivalent to \eqref{S12E1010}, giving the component \(S_1^2\). In addition, \eqref{eq2Am2ss2} is equivalent to \eqref{eqf1g1}, i.e., \(f_1(x) = g_1(x)\). As for \(\mathcal{E}_{10}^{10}\), we obtain the component \(X_1^{2*}\), which is the unique solution of \(f_1(x) = g_1(x)\). Hence, \eqref{eq3Am2ss1} \(+ k_3\)\eqref{eq4Am2ss1} yields
\[
S_2^2 = S_2^1 - \alpha k_2\bigl(X_1^1 - X_1^{2*}\bigr) + \alpha k_3\bigl(X_2^1 - X_2^2\bigr).
\]
From the expressions of \(S_2^1\), \(X_1^1\), and \(X_2^1\) in Table~\ref{TableAM2}, we obtain
\[
S_2^2 = S_2^{\mathrm{in}} + \alpha k_2 X_1^{2*} - \alpha k_3 X_2^2.
\]
Then, from \eqref{eq4Am2ss1} and the expression for \(S_2^2\), the variable \(X_2^2\) must satisfy
\[
\mu_2\bigl(S_2^1 - \alpha k_2(X_1^1 - X_1^{2*}) + \alpha k_3(X_2^1 - X_2^2)\bigr) = \alpha D_2\left(\frac{X_2^2 - X_2^1}{X_2^2}\right),
\]
which is equivalent to \(f_2(X_2^2) = g_2(X_2^2)\). From the expression of \(d\) in Lemma~\ref{lemH2}, we have \(X_2^{1i} < d\) since \(X_1^1 < X_1^{2*}\) (see Lemma~\ref{lemH1}). By Lemma~\ref{lemH2}, the equation \(f_2(x) = g_2(x)\) has at least one solution in \((X_2^{1i}, d)\). Therefore, we obtain the components of \(\mathcal{E}_{1i}^{11}\) in Table~\ref{TabComSS} and their existence conditions in Table~\ref{TableCondExisStab}.
\end{itemize}
\end{proof}
\begin{proof}[Proposition \ref{PropStab}]
At a steady state $(S_1^1, X_1^1, S_2^1, X_2^1, S_1^2, X_1^2, S_2^2, X_2^2)$, the Jacobian $J$ of \eqref{ModelAM2S2C} is given by the following lower block triangular matrix:
\[
J = \begin{bmatrix} J_1 & 0 \\ J_2 & J_3 \end{bmatrix},\quad
J_1 = \begin{bmatrix}
a_{11} & a_{12} & 0 & 0 \\ a_{21} & a_{22} & 0 & 0 \\ * & * & a_{33} & a_{34} \\ 0 & 0 & a_{43} & a_{44}
\end{bmatrix},\quad
J_3 = \begin{bmatrix}
a_{55} & a_{56} & 0 & 0 \\ a_{65} & a_{66} & 0 & 0 \\ * & * & a_{77} & a_{78} \\ 0 & 0 & a_{87} & a_{88}
\end{bmatrix},
\]
where `$*$` denotes coefficients ($a_{31},a_{32}$ in $J_1$, $a_{75},a_{76}$ in $J_3$) that do not affect the stability of the steady state. The relevant entries are
\[
\begin{aligned}
&a_{11}=-D_1-k_1\mu_1'(S_1^1)X_1^1, & a_{12}=-k_1\mu_1(S_1^1), &  a_{21}=\mu_1'(S_1^1)X_1^1, & a_{22}=\mu_1(S_1^1)-\alpha D_1,\\
&a_{33}=-D_1-k_3\mu_2'(S_2^1)X_2^1, & a_{34}=-k_3\mu_2(S_2^1), & a_{43}=\mu_2'(S_2^1)X_2^1, & a_{44}=\mu_2(S_2^1)-\alpha D_1,\\
&a_{55}=-D_2-k_1\mu_1'(S_1^2)X_1^2, & a_{56}=-k_1\mu_1(S_1^2), & a_{65}=\mu_1'(S_1^2)X_1^2, & a_{66}=\mu_1(S_1^2)-\alpha D_2,\\
&a_{77}=-D_2-k_3\mu_2'(S_2^2)X_2^2, & a_{78}=-k_3\mu_2(S_2^2), & a_{87}=\mu_2'(S_2^2)X_2^2, & a_{88}=\mu_2(S_2^2)-\alpha D_2.
\end{aligned}
\]
$J_1$ and $J_3$ are the Jacobians of the upper and lower four-dimensional subsystems; stability depends on their eigenvalues. Block-diagonalizing
\[
J_3 = \begin{bmatrix} J_3^1 & 0 \\ J_3^2 & J_3^3 \end{bmatrix},\quad
J_3^1 = \begin{bmatrix} a_{55} & a_{56} \\ a_{65} & a_{66} \end{bmatrix},\quad
J_3^3 = \begin{bmatrix} a_{77} & a_{78} \\ a_{87} & a_{88} \end{bmatrix},
\]
gives eigenvalues of $J_3$ as those of $J_3^1$ and $J_3^3$. 
At $E_{00}^{00}=(S_1^{\mathrm{in}},0,S_2^{\mathrm{in}},0)$,
\[
J_3^1 = \begin{bmatrix}-D_2 & -k_1\mu_1(S_1^{\mathrm{in}})\\0 & \mu_1(S_1^{\mathrm{in}})-\alpha D_2\end{bmatrix},\quad
J_3^3 = \begin{bmatrix}-D_2 & -k_3\mu_2(S_2^{\mathrm{in}})\\0 & \mu_2(S_2^{\mathrm{in}})-\alpha D_2\end{bmatrix},
\]
with eigenvalues
$\lambda_1=-D_2$, $\lambda_2=\mu_1(S_1^{\mathrm{in}})-\alpha D_2$, $\lambda_3=-D_2$, $\lambda_4=\mu_2(S_2^{\mathrm{in}})-\alpha D_2$,
negative iff $S_1^{\mathrm{in}}<\lambda_1^2$ and $S_2^{\mathrm{in}}\notin[\lambda_2^{21},\lambda_2^{22}]$. By Table~\ref{TableCondExisStab}, $\mathcal{E}_{00}^{00}$ is stable iff the two conditions therein hold.
 
At \(E_{00}^{0i} = \bigl(S_1^{\mathrm{in}}, 0, \lambda_2^{2i}, X_2^{2i*}\bigr)\), \(i = 1,2\), where the component \(X_2^{2i*}\) is defined in Table~\ref{TabComSS}, the Jacobian matrices \(J_3^1\) and \(J_3^3\) are given by
\begin{equation*}
J_3^1 =
\begin{bmatrix}
-D_2 & -k_1\mu_1(S_1^{\mathrm{in}}) \\[2pt]
0 & \mu_1(S_1^{\mathrm{in}}) - \alpha D_2
\end{bmatrix},\qquad
J_3^3 =
\begin{bmatrix}
-D_2 - k_3\mu_2'(\lambda_2^{2i}) X_2^{2i*} & -\alpha k_3 D_2 \\[2pt]
\mu_2'(\lambda_2^{2i}) X_2^{2i*} & 0
\end{bmatrix}.
\end{equation*}

Similarly to \(E_{00}^{00}\), the eigenvalues of \(J_3^1\) are negative if and only if \(S_1^{\mathrm{in}} < \lambda_1^2\). In addition, the determinant and trace of \(J_3^3\) are given by
\[
\det(J_3^3) = \alpha k_3 D_2 \mu_2'(\lambda_2^{2i}) X_2^{2i*},\qquad
\Tr(J_3^3) = -D_2 - k_3 \mu_2'(\lambda_2^{2i}) X_2^{2i*}.
\]

From \textbf{(H2)}, \(E_{00}^{02}\) is unstable when it exists, since \(\det(J_3^3) < 0\). However, \(E_{00}^{01}\) is LES if and only if \(S_1^{\mathrm{in}} < \lambda_1^2\), because \(\det(J_3^3) > 0\) and \(\Tr(J_3^3) < 0\).

At \(E_{00}^{10} = \bigl(\lambda_1^2, X_1^{2*}, S_2^{2*}, 0\bigr)\), where
\(X_1^{2*} = (S_1^{\mathrm{in}} - \lambda_1^2)/(\alpha k_1)\) and
\(S_2^{2*} = S_2^{\mathrm{in}} + \frac{k_2}{k_1}(S_1^{\mathrm{in}} - \lambda_1^2)\),
the Jacobian matrices \(J_3^1\) and \(J_3^3\) are given by
\begin{equation*}
J_3^1 =
\begin{bmatrix}
-D_2 - k_1\mu_1'(\lambda_1^2) X_1^{2*} & -\alpha k_1 D_2 \\[2pt]
\mu_1'(\lambda_1^2) X_1^{2*} & 0
\end{bmatrix},\qquad
J_3^3 =
\begin{bmatrix}
-D_2 & -k_3\mu_2(S_2^{2*}) \\[2pt]
0 & \mu_2(S_2^{2*}) - \alpha D_2
\end{bmatrix}.
\end{equation*}

Therefore, the determinant and trace of \(J_3^1\) are given by
\[
\det(J_3^1) = \alpha k_1 D_2 \mu_1'(\lambda_1^2) X_1^{2*},\qquad
\Tr(J_3^1) = -D_2 - k_1 \mu_1'(\lambda_1^2) X_1^{2*}.
\]

From \textbf{(H1)}, \(\det(J_3^1) > 0\) and \(\Tr(J_3^1) < 0\). In addition, the eigenvalues of \(J_3^3\) are negative if and only if
\[
\mu_2\!\left(S_2^{\mathrm{in}} + \frac{k_2}{k_1}(S_1^{\mathrm{in}} - \lambda_1^2)\right) < \alpha D_2.
\]
From \textbf{(H2)} and the stability condition of \(E_{00}\) in Table~\ref{TableStabModelAM2S1S}, it follows that \(\mathcal{E}_{00}^{10}\) is LES if and only if the stability condition in Table~\ref{TableCondExisStab} holds.

At \(E_{00}^{1i} = \bigl(\lambda_1^2,\; X_1^{2*},\; \lambda_2^{2i},\; X_2^{2i*}\bigr)\), where the components $ X_1^{2*}$ and \(X_2^{2i*}\) are defined in Table~\ref{TabComSS}, the Jacobian matrices \(J_3^1\) and \(J_3^3\) are given by
\begin{equation*}
J_3^1 =
\begin{bmatrix}
-D_2 - k_1\mu_1'(\lambda_1^2) X_1^{2*} & -\alpha k_1 D_2 \\[4pt]
\mu_1'(\lambda_1^2) X_1^{2*} & 0
\end{bmatrix},\quad
J_3^3 =
\begin{bmatrix}
-D_2 - k_3\mu_2'(\lambda_2^{2i}) X_2^{2i*} & -\alpha k_3 D_2 \\[4pt]
\mu_2'(\lambda_2^{2i}) X_2^{2i*} & 0
\end{bmatrix}.
\end{equation*}

Similarly to \(E_{00}^{10}\), the eigenvalues of \(J_3^1\) are negative. In addition, the determinant and trace of \(J_3^3\) are given by
\[
\det(J_3^3) = \alpha k_3 D_2 \,\mu_2'(\lambda_2^{2i})\, X_2^{2i*},\qquad
\Tr(J_3^3) = -D_2 - k_3\mu_2'(\lambda_2^{2i})\, X_2^{2i*}.
\]

From \textbf{(H2)}, the steady state \(\mathcal{E}_{00}^{12}\) is unstable when it exists, since \(\det(J_3^3) < 0\). However, \(E_{00}^{11}\) is LES because \(\det(J_3^3) > 0\) and \(\Tr(J_3^3) < 0\).

At \(E_{10}^{10} = \bigl(S_1^{2*}, X_1^{2*}, S_2^{2*}, 0\bigr)\), where
\(S_1^{2*} = S_1^{\mathrm{in}} - \alpha k_1 X_1^{2*}\), \(X_1^{2*}\) is the unique solution of \(f_1(X_1^{2*}) = g_1(X_1^{2*})\), and \(S_2^{2*} = S_2^{\mathrm{in}} + \alpha k_2 X_1^{2*}\),
the Jacobian matrices \(J_3^1\) and \(J_3^3\) are given by
\begin{equation*}
J_3^1 =
\begin{bmatrix}
-D_2 - k_1\mu_1'(S_1^{2*}) X_1^{2*} & -k_1\mu_1(S_1^{2*}) \\[4pt]
\mu_1'(S_1^{2*}) X_1^{2*} & \mu_1(S_1^{2*}) - \alpha D_2
\end{bmatrix},\qquad
J_3^3 =
\begin{bmatrix}
-D_2 & -k_3\mu_2(S_2^{2*}) \\[4pt]
0 & \mu_2(S_2^{2*}) - \alpha D_2
\end{bmatrix}.
\end{equation*}

Therefore, the determinant and trace of \(J_3^1\) are given by
\[\begin{aligned}
\det(J_3^1) &= -\bigl(D_2 + k_1\mu_1'(S_1^{2*}) X_1^{2*}\bigr)\bigl(\mu_1(S_1^{2*}) - \alpha D_2\bigr) + k_1\mu_1'(S_1^{2*})\mu_1(S_1^{2*}) X_1^{2*},\\
\Tr(J_3^1) &= -D_2 - k_1\mu_1'(S_1^{2*}) X_1^{2*} + \mu_1(S_1^{2*}) - \alpha D_2.
\end{aligned}
\]
 Using the definitions of \(f_1\) and \(g_1\) in Table~\ref{TabFunc2}, we easily obtain
 \[
{\small \det(J_3^1) = D_2 X_1^{2*}\Bigl[g_1'(X_1^{2*}) - f_1'(X_1^{2*})\Bigr], ~
\Tr(J_3^1) = -D_2 - X_1^{2*}\Bigl[g_1'(X_1^{2*}) - \frac{f_1'(X_1^{2*})}{\alpha}\Bigr].}
\]

Recall that the functions \(x \mapsto g_1(x) - f_1(x)\) and \(x \mapsto g_1(x) - \frac{1}{\alpha}f_1(x)\) are increasing (see the proof of Lemma~\ref{lemH1}). Hence, \(\det(J_3^1) > 0\) and \(\Tr(J_3^1) < 0\).

In addition, the eigenvalues of \(J_3^3\) are negative if and only if \(\mu_2\bigl(S_2^{\mathrm{in}} + \alpha k_2 X_1^{2*}\bigr) < \alpha D_2\). Using \textbf{(H2)} and the stability condition of \(E_{10}\) in Table~\ref{TableStabModelAM2S1S}, \(\mathcal{E}_{10}^{10}\) is LES if and only if the two stability conditions in Table~\ref{TableCondExisStab} hold.

At \(E_{10}^{1i} = \bigl(S_1^{2*}, X_1^{2*}, \lambda_2^{2i}, X_2^{2i*}\bigr)\), \(i = 1,2\), where the components $S_1^{2*}$, $ X_1^{2*}$ and \(X_2^{2i*}\) are defined in Table~\ref{TabComSS}, the Jacobian matrix \(J_3^1\) is the same as at the steady state \(E_{10}^{10}\), and \(J_3^3\) is given by
\begin{equation*}
J_3^3 =
\begin{bmatrix}
-D_2 - k_3\mu_2'(\lambda_2^{2i})\,X_2^{2i*} & -\alpha k_3 D_2 \\[4pt]
\mu_2'(\lambda_2^{2i})\,X_2^{2i*} & 0
\end{bmatrix}.
\end{equation*}

Similarly to \(E_{10}^{10}\), the eigenvalues of \(J_3^1\) are negative. In addition, the determinant and trace of \(J_3^3\) are given by
\[
\det(J_3^3) = \alpha k_3 D_2 \,\mu_2'(\lambda_2^{2i})\, X_2^{2i*},\qquad
\Tr(J_3^3) = -D_2 - k_3 \mu_2'(\lambda_2^{2i})\, X_2^{2i*}.
\]

From \textbf{(H2)}, \(E_{10}^{12}\) is unstable when it exists, since \(\det(J_3^3) < 0\), while \(E_{10}^{11}\) is LES when it exists, because \(\det(J_3^3) > 0\) and \(\Tr(J_3^3) < 0\).

For \(E_{01}^{01} = \bigl(S_1^{\mathrm{in}}, 0, S_2^{2*}, X_2^{2*}\bigr)\), where
\(S_2^{2*} = S_2^{\mathrm{in}} - \alpha k_3 X_2^{2*}\) and \(X_2^{2*}\) is the unique solution of \(f_2(x) = g_2(x)\), the Jacobian matrix \(J_3^1\) is the same as at the steady state \(E_{00}^{00}\), and \(J_3^3\) is given by
\begin{equation*}
J_3^3 =
\begin{bmatrix}
-D_2 - k_3\mu_2'(S_2^{2*})\,X_2^{2*} & -k_3\mu_2(S_2^{2*}) \\[4pt]
\mu_2'(S_2^{2*})\,X_2^{2*} & \mu_2(S_2^{2*}) - \alpha D_2
\end{bmatrix}.
\end{equation*}

Similarly to \(E_{00}^{00}\), the eigenvalues of \(J_3^1\) are negative if and only if \(S_1^{\mathrm{in}} < \lambda_1^2\). In addition, the determinant and trace of \(J_3^3\) are given by
\[
\begin{aligned}
    \det(J_3^3) &= \bigl(D_2 + k_3\mu_2'(S_2^{2*})\,X_2^{2*}\bigr)\bigl(\alpha D_2 - \mu_2(S_2^{2*})\bigr) + k_3 X_2^{2*}\mu_2'(S_2^{2*})\mu_2(S_2^{2*}),\\
    \Tr(J_3^3) &= -D_2 - k_3\mu_2'(S_2^{2*})\,X_2^{2*} + \mu_2(S_2^{2*}) - \alpha D_2.
\end{aligned}
\]
Using the definitions of \(f_2'\) and \(g_2'\) in Lemma~\ref{lemH2}, it follows that
\[
{\small 
\det(J_3^3) = D_2 X_2^{2*}\Bigl[g_2'(X_2^{2*}) - f_2'(X_2^{2*})\Bigr],\\
\Tr(J_3^3) = -D_2 - X_2^{2*}\Bigl[g_2'(X_2^{2*}) - \frac{f_2'(X_2^{2*})}{\alpha}\Bigr].
}\]
Recall that \(X_2^{2*} > x_1^{\mathrm{m}} := \bigl(S_2^{\mathrm{in}} - S_2^{\mathrm{m}}\bigr)/\alpha k_3\), so that the functions \(x \mapsto g_2(x) - f_2(x)\) and \(x \mapsto g_2(x) - \frac{1}{\alpha}f_2(x)\) are increasing (see the proof of Lemma~\ref{lemH2}). Hence, \(\det(J_3^3) > 0\) and \(\Tr(J_3^3) < 0\). Therefore, the stability conditions of \(\mathcal{E}_{01}^{01}\) in Table~\ref{TableCondExisStab} hold.

At \(E_{01}^{11} = \bigl(\lambda_1^2, X_1^{2*}, S_{21}^{2*}, X_2^{2*}\bigr)\), where
\(X_1^{2*} = \bigl(S_1^{\mathrm{in}} - \lambda_1^2\bigr)/\alpha k_1\) and
\(S_{21}^{2*} = \frac{k_2}{k_1}\bigl(S_1^{\mathrm{in}} - \lambda_1^2\bigr) + S_2^{\mathrm{in}} - \alpha k_3 X_2^{2*}\),
and \(X_2^{2*}\) is a solution of \(f_2(x) = g_2(x)\), the Jacobian matrix \(J_3^1\) is the same as at the steady state \(E_{00}^{1i}\), and \(J_3^3\) is given by
\begin{equation}\label{J33E01-11}
J_3^3 =
\begin{bmatrix}
-D_2 - k_3\mu_2'(S_{21}^{2*})\,X_2^{2*} & -k_3\mu_2(S_{21}^{2*}) \\[4pt]
\mu_2'(S_{21}^{2*})\,X_2^{2*} & \mu_2(S_{21}^{2*}) - \alpha D_2
\end{bmatrix}.
\end{equation}

Similarly to \(E_{00}^{11}\), \(\det(J_3^1)\) is positive and \(\Tr(J_3^1)\) is negative. In addition, the determinant and trace of \(J_3^3\) are given by
\begin{equation}\label{detJ33E01-11}
\det(J_3^3) = D_2 X_2^{2*}\Bigl[g_2'(X_2^{2*}) - f_2'(X_2^{2*})\Bigr],
\end{equation}
\begin{equation}\label{TrJ33E01-11}
\Tr(J_3^3) = -D_2 - X_2^{2*}\Bigl[g_2'(X_2^{2*}) - \frac{f_2'(X_2^{2*})}{\alpha}\Bigr].
\end{equation}
From the expression of \(X_2^{1*}\) in Table~\ref{TabComSS} and the expression of \(x_2^{\mathrm{m}}\) in Lemma~\ref{lemH2}, it follows that \(X_2^{1*} < x_2^{\mathrm{m}}\). Using Lemma~\ref{lemH2}, the equation \(f_2(x) = g_2(x)\) has at least one solution in \((X_2^{1*}, d)\). Hence, \(\mathcal{E}_{01}^{11}\) is LES if and only if \(\det(J_3^3) > 0\), \(\Tr(J_3^3) < 0\), and the stability condition of \(E_{01}\) in Table~\ref{TableStabModelAM2S1S} holds. Therefore, the stability conditions of \(\mathcal{E}_{01}^{11}\) in Table~\ref{TableCondExisStab} hold.

At \(E_{11}^{11} = \bigl(S_1^{2*}, X_1^{2*}, S_{22}^{2*}, X_2^{2*}\bigr)\), the Jacobian matrix \(J_3^1\) is the same as at the steady state \(E_{10}^{10}\), and \(J_3^3\) is given by
\begin{equation}\label{J33E11-11}
J_3^3 =
\begin{bmatrix}
-D_2 - k_3\mu_2'(S_{22}^{2*})\,X_2^{2*} & -k_3\mu_2(S_{22}^{2*}) \\[4pt]
\mu_2'(S_{22}^{2*})\,X_2^{2*} & \mu_2(S_{22}^{2*}) - \alpha D_2
\end{bmatrix}.
\end{equation}

Similarly to \(E_{10}^{10}\), \(\det(J_3^1)\) is positive and \(\Tr(J_3^1)\) is negative. Similarly to \(E_{01}^{11}\), the equilibrium \(E_{11}^{11}\) is LES if and only if \(\det(J_3^3) > 0\) and \(\Tr(J_3^3) < 0\). Since \(E_{11}\) is LES when it exists, it follows that \(\mathcal{E}_{11}^{11}\) is LES if and only if the stability conditions in Table~\ref{TableCondExisStab} hold.
\end{proof}
\begin{proof}[Proposition \ref{propStabE0111E1111}]
By Lemma \ref{lemH2}, if $X_2^{1*} > x_2^{\mathrm{m}}$, the equation $f_2(x)=g_2(x)$ admits $k$ solutions in $(X_2^{1*}, d)$ satisfying \eqref{eq:orderingE0111E1111}, yielding equilibria $\mathcal{E}_{01}^{11,1},\dots,\mathcal{E}_{01}^{11,k}$.
On $(x_2^{\mathrm{m}}, d)$, since $g_2$ is increasing and $f_2$ decreasing, we have for all $X_2^{2*}\in(x_2^{\mathrm{m}}, d)$,
\begin{equation}\label{CondStabE01113}
g_2'(X_2^{2*}) - f_2'(X_2^{2*}) < g_2'(X_2^{2*}) - \tfrac{1}{\alpha}f_2'(X_2^{2*}).
\end{equation}
For $l=1,\dots,k-1$, the stability follows directly:
$l$ odd implies $g_2'-f_2'>0 \Rightarrow \det(J_3^3)>0$, hence $\mathcal{E}_{01}^{11,l}$ is LES iff $\Tr(J_3^3)<0$;  
$l$ even gives $g_2'-f_2'<0 \Rightarrow \det(J_3^3)<0$, hence instability.

For $\mathcal{E}_{01}^{11,k}$, $g_2'-f_2'>0$ together with \eqref{CondStabE01113} implies $g_2'-\tfrac{1}{\alpha}f_2'>0$, so it is LES. The same arguments apply to $\mathcal{E}_{11}^{11,l}$ and $\mathcal{E}_{11}^{11,k}$.
\end{proof}
\section*{Acknowledgments}
The authors thank the support of the Euro-Mediterranean research network \href{https://treasure.hub.inrae.fr/}{Treasure} and the Tunisian Ministry of Higher Education and Scientific Research (Young Researchers' Encouragement Program: 06P1D2024-PEJC).

%
%
%
%

\end{document}